\newtheorem{thm}{Theorem}[section]
\newtheorem{lem}[thm]{Lemma}
\newtheorem{con}[thm]{Conjecture}
\theoremstyle{definition}
\newtheorem{rem}[thm]{Remark}
\newtheorem{defn}[thm]{Definition}
\renewcommand{\Re}{\mathbb R}
\renewcommand{\epsilon}{\varepsilon}
\newcommand{\Red}{\Re^d}
\newcommand{\Se}{\mathbb S}
\newcommand{\Sek}{\Se^{2}}
\newcommand{\Pe}{\mathbb P}
\newcommand{\Ze}{\mathbb Z}
\newcommand{\B}{\mathbf B}
\newcommand{\FF}{\mathcal F}
\newcommand{\GG}{\mathcal G}
\newcommand{\st}{\; : \; }
\renewcommand{\phi}{\varphi}
\newcommand{\vol}[1]{\operatorname{vol}\left(#1\right)}
\newcommand{\card}[1]{\left|#1\right|}
\newcommand{\inter}{\operatorname{int}}
\newcommand{\diam}{\operatorname{diam}}
\newcommand\abs[1]{\left|#1\right|}
\newcommand{\Hi}{\mathcal{H}}
\newcommand{\noshow}[1]{}
\newcommand{\maxmul}{C_1d\ln\left(\frac{1}{\varepsilon }\right)}
\title{Coverings: variations on a result of Rogers and 
on the Epsilon-net theorem of Haussler and Welzl}
\author[N. Frankl, J. Nagy, M. Naszódi]{N\'ora Frankl, J\'anos Nagy, M\'arton 
Nasz\'odi}
\address{
Dept. of Geometry,
Lorand E\"otv\"os University,
P\'azm\'any P\'eter S\'et\'any 1/C
Budapest, Hungary 1117
}
\email[N. Frankl]{aronlknarf@gmail.com}
\email[J. Nagy]{janomo4@gmail.com}
\email[M. Naszódi]{marton.naszodi@math.elte.hu}
\keywords{covering, Rogers' bound, spherical strip, density, set-cover, 
epsilon-net theorem}
\subjclass[2010]{52C17, 05D15, 52C15}
\date{\today}
\begin{document}
\begin{abstract}
We consider four problems. Rogers proved that for any convex body $K$, we can 
cover ${\mathbb R}^d$ by translates of $K$ of density very roughly $d\ln d$. 
First, we 
extend this result by showing that, if we are given a family of positive 
homothets of $K$ of infinite total volume, then we can find appropriate 
translation vectors for each given homothet to cover ${\mathbb R}^d$ with the 
same (or, 
in certain cases, smaller) density.

Second, we extend Rogers' result to multiple coverings of space by translates 
of a convex body: we give a non-trivial upper bound on the density of the most 
economical covering where each point is covered by at least a certain number of 
translates.

Third, we show that for any sufficiently large $n$, the sphere ${\mathbb S}^2$ 
can be 
covered by $n$ strips of width $20n/\ln n$, where no point is covered too many 
times.

Finally, we give another proof of the previous result based on a combinatorial 
observation: an extension of the Epsilon-net Theorem of Haussler and Welzl. We 
show that for a hypergraph of bounded Vapnik--Chervonenkis 
dimension, in which each edge is of a certain measure, there is a not-too large 
transversal set which does not intersect any edge too many times.
\end{abstract}
\maketitle

\section{Introduction}

For a convex body $K$ we denote its translative covering density (the minimum 
density of the covering of $\Red$ by translates of $K$) by $\vartheta(K)$. We 
recall Rogers' estimate \cite{Ro57}: 
\begin{equation}\label{eq:rogers}
\vartheta(K)\leq d\ln{d}+d\ln\ln d+5d.
\end{equation}

Our first result is an extension of \eqref{eq:rogers}. For a family $\FF$ of 
sets in $\Red$, we say that $\FF$ \emph{permits a translative covering} of a 
subset $A$ of $\Red$ with density $\vartheta$, if we can select a translation 
vector $x_F\in\Red$ for each member $F$ of $\FF$ such that
$A\subseteq\bigcup\limits_{F\in\FF} x_F+F$, and the density of this covering is 
$\vartheta$.

\begin{thm}\label{thm:rogersallcases}
Let $K$ be a convex body in $\Red$, and let 
$\FF=\{\lambda_1 K, \lambda_2K, \dots \}$ ($0<\lambda_i$) be a 
family of its homothets with 
\[\sum_{i=1}^{\infty}{\lambda_i ^d}=\infty.\]
Let $\Lambda:=\{\lambda_1,\lambda_2,\ldots\}$.
\begin{enumerate}
\renewcommand{\theenumi}{\alph{enumi}}
 \item\label{item:rogerstranslates} 
If $\Lambda$ is bounded, and has a limit point 
other than zero, then $\FF$ permits a covering of space of density 
$\vartheta(K)$. 
 \item\label{item:rogersminis}
If $\Lambda$ is bounded, and has no limit point other than zero, then $\FF$ 
permits a covering of space of density one.
 \item\label{item:rogersmaxis}
If $\Lambda$ is unbounded, then $\FF$ permits a 
covering of space with maximum multiplicity $4d$ (that is, where no point is 
covered by more than $4d$ sets).
\end{enumerate}
\end{thm}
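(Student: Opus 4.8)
We would treat all three cases on a common scaffold: exhaust $\Red$ by an increasing sequence of bounded regions, cover the successive ``frontiers'' by finite batches of not-yet-assigned homothets of scales matched to the frontier, and run a bookkeeping rule (e.g.\ force one extra still-unassigned homothet into each frontier) so that every member of $\FF$ eventually receives a translation vector. Two elementary facts get used throughout: covering density is homothety-invariant, so $\vartheta(\mu K)=\vartheta(K)$ for all $\mu>0$; and a covering of a set by translates of $cK$ upgrades to one by translates of $c'K$, $c'\ge c$, by replacing each $x+cK$ by a translate of $c'K$ containing it (since $c'K\supseteq cK+v$ for $v\in(c'-c)K$), at the price of multiplying the density by $(c'/c)^d$.

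For~\eqref{item:rogerstranslates}, let $\mu>0$ be a limit point of $\Lambda$, so every neighbourhood of $\mu$ contains infinitely many $\lambda_i$. Take the frontiers to be thick spherical shells $A_k$ (inner radius much smaller than outer radius, radii $\to\infty$). Fix $\epsilon_k\downarrow0$, then $\delta_k\downarrow0$ with $((\mu+\delta_k)/(\mu-\delta_k))^d\le 1+\epsilon_k$. By homothety-invariance and \eqref{eq:rogers}, choose a covering of $\Red$ by translates of $(\mu-\delta_k)K$ of density at most $\vartheta(K)+\epsilon_k$; keep the tiles that meet $A_k$ --- for $A_k$ large these have total volume at most $(\vartheta(K)+2\epsilon_k)\vol{A_k}$ --- and upgrade each to a distinct still-unused $\lambda_iK$ with $\lambda_i\in[\mu-\delta_k,\mu+\delta_k]$, of which there are plenty since only finitely many homothets have been used so far. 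This covers $A_k$ with local density $\le(\vartheta(K)+2\epsilon_k)(1+\epsilon_k)$, so the whole covering has density $\vartheta(K)$. The members of $\FF$ not used this way are at most countably many of volume $\le(\sup\Lambda)^d\vol{K}$ each, and dropping one into each shell costs only $o(1)$ in density since $\vol{A_k}\to\infty$.

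For~\eqref{item:rogersminis} the hypothesis forces $\lambda_i\to0$ with $\sum\lambda_i^d=\infty$, i.e.\ arbitrarily small homothets of infinite total volume. I would tile $\Red$ by congruent cubes $C_1,C_2,\dots$ large enough to contain every $\lambda_iK$ and process them in turn by an iterated ``pack, then fill the gaps'' (Vitali/Apollonian-type) argument: on $C_n$, first place the $n$-th still-unused homothet inside $C_n$, then fill the remaining open region by a \emph{disjoint} countable family of translates of members of $\FF$, at each stage packing into the current uncovered open set finitely many disjoint translates of sufficiently small available homothets that cover a fixed fraction $c(K)>0$ of its measure. After countably many stages $C_n$ is covered up to a compact null set $N_n$; since the placed homothets are pairwise disjoint and cover $C_n\setminus N_n$, their total volume is exactly $\vol{C_n}$, and covering $N_n$ by further tiny available homothets of total volume $\le\epsilon_n$ (with $\sum_n\epsilon_n<\infty$) finishes $C_n$ using volume $\vol{C_n}+\epsilon_n$. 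Summing over $n$ gives a covering of $\Red$ of density $1$, with all of $\FF$ used.

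Case~\eqref{item:rogersmaxis} is where I expect the real work. Since $\Lambda$ is unbounded, split off an infinite subfamily $\FF_0\subseteq\FF$ with unbounded scales to do the covering and place the rest of $\FF$ as a packing (pairwise disjoint translates), which adds at most $1$ to the multiplicity anywhere. The covering by $\FF_0$ should grow a nested exhaustion of $\Red$ by cubes or balls, using large homothets to cover each successive frontier in such a way that a point of $\Red$ lies only in $O(d)$ of them --- roughly, those attached to its own scale and the few neighbouring scales whose homothets ``reach'' it. The crux, and the only place a genuinely nontrivial geometric idea is needed, is to arrange this so that the number of covering homothets through any point is at most $4d$: a large convex homothet covering a thin piece of space necessarily overshoots it, so one must organize the decomposition and the placements --- here the unbounded supply of scales, used in an essentially hierarchical way, is what makes this possible --- so that these overshoots never pile up more than $O(d)$ deep at a point while every prescribed scale still gets assigned a translate. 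Parts~\eqref{item:rogerstranslates} and~\eqref{item:rogersminis}, by contrast, are essentially bookkeeping around Rogers' bound and a Vitali-type filling.
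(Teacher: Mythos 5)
Your parts (a) and (b) are essentially sound and close in spirit to the paper's argument. Part (a) is indeed just bookkeeping around Rogers' bound using homothets whose ratios accumulate at the limit point, plus disposal of the unused members (Remark~\ref{rem:maydropsets}). Part (b) in the paper is also a Vitali-type exhaustion; it invokes Vitali's covering theorem together with a quantitative lemma (Theorem~\ref{thm:KcovKcor}: any subfamily of sufficiently small homothets of total volume at least a dimensional constant covers $K$), which is used both to produce the Vitali family and to cover the residual null set by \emph{available} homothets of small total volume. Your iterated ``pack a fixed fraction, then recurse'' replaces Vitali, but you still owe a quantitative statement letting you cover the residual null set by members of $\FF$ (an elementary grid covering suffices). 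One small slip: with congruent cubes, forcing one extra homothet of volume bounded below into each cube pushes the density strictly above $1$; let the cubes grow, or park the unused members in a zero-density arrangement.

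The genuine gap is part (c), and you in effect concede it: your final paragraph restates the goal (organize the hierarchy of scales so overshoots pile up at most $O(d)$ deep) without supplying the mechanism, and the constant $4d$ never emerges. The paper's proof (Theorem~\ref{thm:bigcover}) rests on two ideas you are missing. First, a reduction to $2d$ ``good directions'': one finds at most $2d$ \emph{smooth} boundary points of $K$ whose tangent half-spaces intersect in a bounded polytope $L$ (Lemma~\ref{lem:smoothpoints}); in the smooth case $L$ can be a cube. Second, the crux: smoothness of $K$ at such a point means that, as $\mu\to\infty$, the translate of $\mu K$ kept tangent to the corresponding hyperplane converges locally to the tangent half-space. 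Hence one enormous homothet can contain the whole slab of the next shell lying beyond one facet hyperplane of the current shell, while staying inside an arbitrarily thin neighbourhood of that half-space. This thinness is exactly what yields the multiplicity bound: homothets attached to opposite facets of the same generation are disjoint, and generation-$(k{+}1)$ homothets can be chosen disjoint from all generation-$(k{-}1)$ ones, so every point meets at most two consecutive generations and at most $d$ (resp.\ $2d$) homothets per generation, giving $2d$ (resp.\ $4d$). Without this use of smoothness there is no control on how far back a huge convex homothet reaches into the already-covered region, and no multiplicity bound follows; the paper's accompanying Remark also explains why a naive smooth approximation of $K$ cannot repair this.
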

In case (\ref{item:rogersmaxis}), we will prove maximum multiplicity $2d$ in a 
special case which includes all smooth bodies, see Theorem~\ref{thm:bigcover}.
The proofs are in Section~\ref{sec:rogers}.

In the proof of Theorem~\ref{thm:rogersallcases}, we will use a result on 
covering $K$ by homothets of $K$.

\begin{thm}\label{thm:KcovKcor}
Let $K\subseteq \mathbb{R}^d$ be a convex body of volume one, and let $\FF$ be 
a 
family of 
positive homothets of $K$ with total volume at least 
\[\left\{ \begin{array}{ll} (d^3 \cdot \ln{d} \cdot  \vartheta(K)+e) 
2^d, & \textrm{ if } K=-K,\\
 d^3\cdot  \ln{d}\cdot \vartheta(K) \cdot \binom{2d}{d}+e\cdot 4^d, & \textrm{ 
in 
general.}
\end{array} \right. \]
Then $\FF$ permits a translative covering of $K$.
\end{thm}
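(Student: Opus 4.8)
The plan is to reduce to the case of homothets that are all of comparable size and then run a Rogers-type random-placement argument, with the classical $\varepsilon$-net theorem of Haussler and Welzl (or its extension discussed above) used to pass from ``almost covered'' to ``covered''. First, I would dispose of the trivial case: if some $\lambda_i\ge 1$, then $\lambda_i K$ contains a translate of $K$ and we are done, so we may assume every $\lambda_i<1$. Translating $K$ so that $0\in\inter K$, convexity gives $\lambda K\subseteq K$ for all $0<\lambda<1$; hence for $p\in K$ the translates of $\lambda_i K$ that can contain $p$ are exactly those $x_i+\lambda_i K$ with $x_i\in p-\lambda_i K\subseteq K-K$, and the Rogers--Shephard inequality bounds $\vol{K-K}\le\binom{2d}{d}$, with $\vol{K-K}=2^d$ when $K=-K$. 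This is the source of the two cases in the statement, and it reduces the problem to choosing the vectors $x_i$ inside $K-K$.

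The core estimate is as follows. Place each chosen homothet $\lambda_i K$ at a translate $x_i$ drawn independently and uniformly from $K-K$. Since $p-\lambda_i K\subseteq K-K$ and $\vol K=1$, for fixed $p\in K$ we get $\Pr[p\notin x_i+\lambda_i K]=1-\lambda_i^{\,d}/\vol{K-K}$, so $\Pr[p\text{ uncovered}]\le\exp\!\big(-\tfrac{1}{\vol{K-K}}\sum_i\lambda_i^{\,d}\big)$; thus a batch whose total volume is large compared with $\vol{K-K}$ leaves only a set of tiny measure uncovered in expectation. To upgrade this to an honest covering I would shrink each homothet by a tiny factor, build a finite net of the still-uncovered region measured in the norm of $K$ (so that all constants remain affine/shape-independent, again via Rogers--Shephard), and observe that once the shrunken homothets cover this net, the full-size homothets cover the region; a union bound over the net succeeds as soon as the total volume of the batch exceeds roughly $\vol{K-K}$ times the logarithm of the net size. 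Alternatively, and more efficiently, one may place the homothets not at uniformly random points but at the positions of a Rogers covering of density $\vartheta(K)$ (using \eqref{eq:rogers}), replacing $\vol{K-K}$ by $\vartheta(K)$ in the count where we have enough homothets of a single size to do so.

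To keep the net small, I would process the homothets dyadically, from the largest to the smallest scale: coarse homothets drive the uncovered measure down quickly, so that by the time we reach scale $2^{-k}$ the uncovered set already has measure $\lesssim 2^{-kd}$ and the net needed at that scale has size bounded in terms of $d$ alone, contributing only an $O(d\ln d)$ logarithmic loss per scale. Scales carrying too little volume to maintain this invariant are grouped into batches of comparable total volume. A separate, cleaner sub-case handles the ``tail'' of very small homothets (say of ratio below $2^{-d}$): if the bulk of the total volume lies there, the boundary/discretisation corrections are negligible and a Rogers covering at the tail scales, refined scale by scale, suffices; otherwise the bulk lies in the $O(d)$ coarse scales, and a pigeonhole over those scales produces one scale rich enough to cover $K$ by itself via the estimate above. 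Summing the contributions gives a required total volume of order $d^3\ln d\cdot\vartheta(K)\cdot\vol{K-K}$: one factor $\vartheta(K)$ and one $d\ln d$ from inserting Rogers' bound where the shrunken homothets must cover efficiently, one factor $\vol{K-K}$ from the domain $K-K$, and an extra $d^2$ from the number of scales tracked and the per-scale net logarithm; the additive term $e\cdot 4^d$ (resp.\ $e$) absorbs the residual volume of the skipped/degenerate scales and the ``at least one homothet'' rounding.

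I expect the main obstacle to be the bookkeeping when infinitely many scales are present and the uncovered region, although of tiny measure, is spread out through $K$ (so that $U-\lambda K$ is not small even when $\vol U$ is): one must balance the net size against the decaying uncovered measure carefully, and it is likely cleanest to prove the statement first for finite families $\FF$ — where there is a smallest scale — and then recover the general case by a compactness/limiting argument on the translation vectors, which live in the compact set $K-K$. The remaining routine points are the shape-independence of all the geometric constants (handled throughout by Rogers--Shephard and by measuring everything in the norm of $K$) and the explicit constants in the $\varepsilon$-net union bound, which I would not grind through here.
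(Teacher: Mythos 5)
Your overall strategy is the same one the paper uses: normalize by Rogers--Shephard to make everything depend on $\vol{K-K}$ (whence the $\binom{2d}{d}$ vs.\ $2^d$ dichotomy when $K=-K$), group the homothets into geometric scales, and split into two cases --- either a single narrow scale band carries enough volume that $O(\vartheta(K)\vol{K-K}/\mu^d)$ translates of $\mu K$ cover $K$, or else the tail of small scales carries enough volume and a Rogers-type random-placement-plus-net argument finishes. This is precisely how the paper proves the quantitative master statement Theorem~\ref{thm:KcoversK} (from which Theorem~\ref{thm:KcovKcor} follows by setting $\varepsilon=1/d$ and invoking $\vol{K\cap(2x-K)}\ge 2^{-d}$ together with Rogers--Shephard). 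Your ``place the homothets at the positions of a Rogers covering'' idea for a rich single scale is, in the paper, packaged as the covering-number bound of Rogers and Zong~\cite{RZ97}: $K$ is coverable by at most $\vol{K-\mu K}\vartheta(K)/\vol{\mu K}$ translates of $\mu K$.

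There is, however, a genuine gap, and it is exactly the one you flag at the end and do not resolve. After placing a batch of homothets at random, the uncovered set $U$ has tiny volume, but nothing in what you have said prevents $U$ from being spread throughout $K$, so that a $\lambda$-scale $K$-net of $U$ (equivalently, the number of $\lambda K$-cells meeting $U$) is \emph{not} controlled by $\vol{U}/\lambda^d$. Your proposed fix --- prove the result for finite families and pass to the limit by compactness of the translation vectors in $K-K$ --- removes the infinitely-many-scales bookkeeping but does nothing to this net-size issue, since it arises already for two scales. This is the technical heart of the matter, and the paper sidesteps it by invoking Theorem~\ref{thm:nmsufficientcovering}, a slight generalization of \cite{Na10}*{Theorem~1.3}, whose proof carries exactly the careful ``uncovered set stays concentrated on a grid'' bookkeeping that your sketch is missing. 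Without supplying that ingredient (or citing it), your argument does not close.
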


This result is a strengthening of a result of \cite{Na10}, which, in turn is a 
strengthening of a result of Januszewski \cite{Ja03}. We prove it in 
subsection~\ref{subsec:KcovK}. We learned that a stronger bound was recently 
obtained by Livshyts and Tikhomirov \cite{LivTik16}.

Our second topic is multiple coverings of space. We denote the infimum of the 
densities of $k$-fold coverings of $\Red$ by translates of $K$ by 
$\vartheta^{(k)}(K)$. Apart from the estimate that follows from 
\eqref{eq:rogers} using the obvious fact $\vartheta^{(k)}(K)\leq 
k\vartheta(K)$, no general estimate has been known. 
For the Euclidean ball $\B_2^d$ in $\Red$, G. Fejes T\'oth 
\cites{FTG76,FTG79} gave the non-trivial lower bound 
$\vartheta^{(k)}(\B_2^d)>c_d k$ for some $c_d>1$, see more in the survey 
\cite{FTGhandbook}. We prove

\begin{thm}\label{thm:multiplecovspace}
Let $K\subseteq \Red$ be a convex body and $k \le d(\ln 
d+\ln \ln d)$. Then
\[\vartheta^{(k)}(K)\leq 6ed(3\ln d+\ln \ln d+15).\]
\end{thm}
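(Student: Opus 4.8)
The plan is to deduce a bound on $\vartheta^{(k)}(K)$ from Rogers' single-covering estimate \eqref{eq:rogers} by building a $k$-fold covering out of roughly $k$ independent "shifted copies" of an economical single covering, but organised so that the multiplicities add up in a controlled way rather than naively multiplying. Let me think about why the naive bound $\vartheta^{(k)}(K)\le k\vartheta(K)$, which gives roughly $kd\ln d$, is too weak: it is only a factor of about $\ln d$ worse than the claimed bound when $k$ is of order $d\ln d$, but for smaller $k$ the claimed bound $6ed(3\ln d+\ln\ln d+15)$ does not grow with $k$ at all, so the real content is that one can achieve multiplicity-$k$ covering at density essentially that of a single Rogers covering, uniformly for all $k$ up to $d(\ln d+\ln\ln d)$.

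The natural approach is the probabilistic/averaging one underlying Rogers' original argument. Take a large box, place translates of $K$ according to a Poisson process (or a random periodic arrangement) with intensity $\mu$ chosen so that the expected number of translates covering a given point is $\mu\vol{K}=:t$. The number of translates covering a fixed point is then Poisson with mean $t$; the arrangement fails to be a $k$-fold covering at a point precisely when fewer than $k$ translates cover it, an event of probability $\Pr[\mathrm{Poisson}(t)<k]$. Choosing $t$ a constant factor above $k$ — say $t=ek$ or so — makes this probability small, and a Rogers-type "patching" argument (cover the uncovered set, which is sparse, by a thin extra layer at bounded extra density) upgrades "$k$-fold covers almost everywhere" to "$k$-fold covers everywhere." Since we only need $t$ proportional to $\max(k,d\ln d)$ — we still must absorb the failure probability, which forces $t\gtrsim d\ln d$ regardless of $k$ via the usual Rogers counting over a net of the body — the hypothesis $k\le d(\ln d+\ln\ln d)$ is exactly what guarantees that $d\ln d$ is the dominant term, so $t$ can be taken of order $d\ln d$ and the resulting density $t=\vol{K}\mu$ is of order $d\ln d$, matching \eqref{eq:rogers} up to the explicit constant $6e$ and the cosmetic change of $\ln d$ to $3\ln d$.

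Concretely, the key steps in order are: (1) fix the intensity parameter so that the mean multiplicity $t$ equals roughly $\max\{6ek,\; 6e\,d(\ln d+\ln\ln d)\}$, which by the hypothesis is just $6e\,d(\ln d+\ln\ln d)$ up to lower-order terms, explaining the shape $6ed(3\ln d+\ln\ln d+15)$; (2) run Rogers' lattice/covering construction or a direct probabilistic argument to show a random arrangement at this intensity $k$-fold covers all but an exponentially small fraction of a fundamental domain — here one uses that $\Pr[\mathrm{Poisson}(t)<k]$ decays like $e^{-ct}$ once $t\ge ek$, combined with a union bound over a suitable $\varepsilon$-net of $K$ of size roughly $(C/\varepsilon)^d$; (3) patch the residual non-$k$-covered set with $k$ extra shifted copies of a cheap covering, or equivalently observe the leftover contributes only $O(1)$ to the density; (4) let the box size tend to infinity to pass from the torus to all of $\Red$, and collect constants.

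The main obstacle I anticipate is step (2): making the union bound over the net actually close. One must show that if $t$ is a suitable constant times $d\ln d$, then $(C/\varepsilon)^d\cdot\Pr[\mathrm{Poisson}(t)<k]<1$ while simultaneously the discretisation error of passing from the net to all points of $K$ is controlled — this is precisely the delicate counting at the heart of Rogers' theorem, and getting the constant down to $6e$ rather than something larger requires choosing $\varepsilon$, the net, and $t$ in a coordinated way (and likely invoking \eqref{eq:rogers} itself as a black box for the patching layer rather than redoing it). A secondary subtlety is that a Poisson process gives a covering of *infinite* density unless truncated to a periodic pattern, so one should really work with a random translate of a fixed fine lattice, or with finitely many translates in a large torus, where "density" is literally the average multiplicity $t$; this is standard but must be set up carefully so that the final object is genuinely a $k$-fold covering of all of $\Red$.
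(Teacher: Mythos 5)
Your plan is essentially the paper's proof: the paper formalizes your random-placement-plus-Chernoff-plus-union-bound scheme as a lemma stating that the $k$-fold covering number of a finite set system is at most $\lceil 6\tau^\ast\max\{\ln|X|,k\}\rceil$, applies it to a saturated-packing net $\Lambda$ of a cube $C=[-d^2/2,d^2/2]^d$ with $\ln|\Lambda|\approx d(3\ln d+\ln\ln d)$, and the hypothesis $k\le d(\ln d+\ln\ln d)$ makes the $\ln|\Lambda|$ term dominate exactly as you predict. The discretization issue you flag as the main obstacle is resolved not by patching but by demanding that the net points be covered $k$-fold by translates of the inner parallel body $K\thicksim\delta(K\cap(-K))$ with $\delta=\tfrac{1}{2d\ln d}$, so that every point of $C$ is then automatically covered $k$-fold by the corresponding translates of $K$.
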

This shows that G. Fejes T\'oth's bound (up to a constant factor) is sharp if 
$k=d\ln d$.

To prove Theorem~\ref{thm:multiplecovspace}, we present 
in Section~\ref{sec:multiple}
a more general statement, Theorem~\ref{thm:multiplecovgeneral}, which extends
\cite{AS}*{Theorem~1.6} and \cite{N15}*{Theorem~1.2}.

Our third topic is covering the sphere $\Sek:=\{x\in\Re^3\st |x|=1\}$ by 
strips. For a given point $x\in\Sek$, and $0\leq w\leq 1$, we call
$\{v\in\Sek\st \abs{\langle v,x \rangle}\leq w\}$ the \emph{strip} centered at 
$x$, of 
Euclidean half-width $w$.

\begin{thm}\label{thm:coverbystrips}
For any sufficiently large integer $N$, there is a covering of $\Sek$ by $N$ 
strips of Euclidean half-width $\frac{10\ln{N}}{N}$, with no point covered more 
than 
$c\ln{N}$ times, where $c$ is a universal constant.
\end{thm}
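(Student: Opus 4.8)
The plan is to prove the theorem by the probabilistic method: pick the centres of the $N$ strips at random and show that, with probability tending to $1$, the resulting arrangement is simultaneously a covering and has bounded multiplicity. Concretely, I would choose $x_1,\dots,x_N\in\Sek$ independently and uniformly at random and take the strips $S_i:=\{v\in\Sek\st\abs{\iprod{v}{x_i}}\le w\}$ of Euclidean half-width $w:=\frac{10\ln N}{N}$.

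The first ingredient is the (normalised) area of a single strip: after rotating so that $x=(0,0,1)$, the strip $\{v\in\Sek\st\abs{v_3}\le w\}$ is a zone of spherical height $2w$, so by Archimedes' hat-box theorem it has area $4\pi w$, i.e.\ a fraction $w$ of $\Sek$. Consequently, for a fixed $x$ and a uniform random $v$ we have $\Pe[v\in S(x,w)]=w$; by the symmetry of $\abs{\iprod{v}{x}}$ in its two arguments, the same holds with the roles of $v$ and $x$ interchanged.

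Next I would discretise. Put $\delta:=1/N$ and let $\mathcal N\subseteq\Sek$ be a maximal $\delta$-separated set; it is a $\delta$-net, and a standard volume-packing estimate gives $\card{\mathcal N}\le C_0N^{2}$ for an absolute constant $C_0$. Since $\abs{\iprod{v}{y}-\iprod{v'}{y}}\le\abs{v-v'}$ for $y\in\Sek$, two elementary observations hold for any $v\in\mathcal N$ and any $v'\in\Sek$ with $\abs{v-v'}\le\delta$: (i) if $\abs{\iprod{v}{x_i}}<w-\delta$ for some $i$, then $v'\in S_i$; and (ii) if $v'\in S_i$, then $\abs{\iprod{v}{x_i}}\le w+\delta$. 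Thus it suffices to show that, with high probability, every $v\in\mathcal N$ lies in some shrunken strip $\{\,\abs{\iprod{v}{x_i}}<w-\delta\,\}$ (which by (i) forces the $S_i$ to cover $\Sek$), and that every $v\in\mathcal N$ satisfies $M_v:=\card{\{i\st\abs{\iprod{v}{x_i}}\le w+\delta\}}\le c\ln N$ (which by (ii) forces multiplicity at most $c\ln N$ everywhere).

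For the covering step, for a fixed $v\in\mathcal N$ the shrunken strips are independent events of probability $w-\delta$ each, so the probability that $v$ lies in none of them is $(1-(w-\delta))^N\le e^{-(w-\delta)N}=e\cdot N^{-10}$; a union bound over $\mathcal N$ makes the total failure probability at most $eC_0N^{-8}\to0$. This is exactly where the constant $10$ in the half-width is spent: it must be large enough to beat the polynomial size of the net. For the multiplicity step, $M_v$ is a sum of $N$ independent $0/1$ variables with mean $\mu_v:=N(w+\delta)=10\ln N+1\le 11\ln N$ for $N$ large, so a Chernoff bound of the form $\Pe[X\ge a]\le(e\mu/a)^{a}$ (valid for $a\ge\mu$, $\mu$ the expectation) gives $\Pe[M_v\ge c\ln N]\le N^{-5}$ once $c$ is a large enough universal constant (e.g.\ $c\ge 11e^{2}$), and a union bound over $\mathcal N$ bounds this failure by $C_0N^{-3}\to0$. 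For $N$ sufficiently large both bad events are avoided with positive probability, which yields the desired covering. I expect the only step needing genuine care to be this passage from pointwise to uniform statements via the net — in particular, shrinking the strips by $\delta$ when arguing for covering and fattening them by $\delta$ when bounding multiplicity, and checking that $\delta$ may be taken as small as $1/N$ (hence $\mathcal N$ of size $O(N^2)$) without spoiling either estimate.
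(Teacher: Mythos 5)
Your proposal is correct and follows essentially the same route as the paper's proof: random uniform strip centres, a polynomial-size net on $\Sek$, a Lipschitz transfer between net points and arbitrary points (shrinking the strips for the covering bound, fattening them for the multiplicity bound), an exponential estimate for the covering failure and a binomial tail estimate for the multiplicity, finished by union bounds. The only differences are cosmetic: the paper's net has mesh $\ln N/N$ rather than $1/N$, and it bounds the binomial tail directly via Stirling instead of quoting the Chernoff form $\left(e\mu/a\right)^{a}$.
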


Our study of this question was motivated by a problem at the 2015 Mikl\'os 
Schweitzer competition posed by Andr\'as Bezdek, Ferenc Fodor, Viktor V\'igh 
and 
Tam\'as Zarn\'ocz on covering the two-dimensional sphere by strips of a given 
width such that no point is covered too many times.

We note the following dual version of Theorem~\ref{thm:coverbystrips}, and 
leave it to the reader to convince themselves that the two versions are 
equivalent:
\emph{
For any sufficiently large integer $N$, we can select $N$ points of $\Sek$ 
such that each strip of Euclidean half-width $\frac{10\ln{N}}{N}$ contains at 
least one and at most $c\ln{N}$ points, where $c$ is a universal constant.
}

In Section~\ref{sec:strips}, we present a direct, probabilistic proof of 
Theorem~\ref{thm:coverbystrips}. 

Our third topic, presented in Section~\ref{sec:vc}, is studying variants of the 
Epsilon-net theorem of Haussler and Welzl \cite{HaWe87}.

A set $X$ with a family $\Hi\subseteq 2^X$ of some of its subsets is called a 
\emph{hypergraph}, its Vapnik--Chervonenkis dimesion (VC-dimension, for short) 
is defined in Section~\ref{sec:vc}.  

\begin{thm}\label{thm:vcfewandmanyinedge}
Let $X$ be a set,  $\Hi\subset 2^X$ a hypergraph 
on $X$ of VC-dimension at most $d\geq2$, and $0<\varepsilon<1$. Let $\mu$ be a 
probability measure 
on $X$ with $\mu(H)=\varepsilon$ for each $H\in \Hi$.

a) If $\varepsilon \leq \frac{1}{d}$, then one can choose $\left\lfloor 
C\frac{d}{\varepsilon}\ln(1/\varepsilon)\right\rfloor$ elements of $X$ (not 
necessarily distinct), such that 
each edge of $\Hi$ contains at least one and at most $\maxmul$ chosen 
points (with multiplicity), where $C$ and $C_ 1$ are universal constants.

b) One can choose $\left\lfloor 
C\frac{d}{\varepsilon}\ln\left(\frac{1}{\varepsilon}+1\right)\right\rfloor$ 
elements of $X$ (not
necessarily distinct), such that 
each edge of $\Hi$ contains at least one and at most $C_1 d \ln 
d\ln\left(\frac{1}{\varepsilon}+1\right)$ chosen 
points (with multiplicity), where $C$ and $C_ 1$ are universal constants.
\end{thm}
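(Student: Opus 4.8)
The plan is to produce the required points by random sampling, handling the lower bound (each edge is met) and the upper bound (no edge is met too often) separately. Set $m:=\left\lfloor C\tfrac{d}{\varepsilon}\ln(1/\varepsilon)\right\rfloor$ in part a) and $m:=\left\lfloor C\tfrac{d}{\varepsilon}\ln(1/\varepsilon+1)\right\rfloor$ in part b), let $S$ be a multiset of $m$ points of $X$ chosen independently according to $\mu$, and write $t$ for the claimed multiplicity bound, so that $t=\maxmul$ in a) and $t=C_1 d\ln d\ln(1/\varepsilon+1)$ in b). I will choose the universal constants $C$ and $C_1$ so that, with probability more than $\tfrac12$, the multiset $S$ simultaneously satisfies (i) $S\cap H\ne\emptyset$ for every $H\in\Hi$ and (ii) $\card{S\cap H}\le t$ for every $H\in\Hi$; for this it suffices that each of the two complementary bad events has probability less than $\tfrac14$.

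Property (i) is exactly the conclusion of the Epsilon-net theorem of Haussler and Welzl applied to $\Hi$, every edge of which has $\mu$-measure $\varepsilon$ (in particular, at least $\varepsilon$): a random multiset of size $\Omega(\tfrac1\varepsilon(d\ln\tfrac1\varepsilon+\ln\tfrac1\delta))$ is an $\varepsilon$-net with probability at least $1-\delta$, so taking $C$ large enough makes this failure probability below $\tfrac14$. In part b) one writes $\ln(1/\varepsilon+1)$ instead of $\ln(1/\varepsilon)$ precisely to guarantee $m=\Omega(d)$ even when $\varepsilon$ is close to $1$, which a range space of VC-dimension $d$ still requires; since $\ln(1/\varepsilon+1)\ge\max\{\ln(1/\varepsilon),\ln 2\}$, the sample remains large enough for every $\varepsilon\in(0,1)$.

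Property (ii) is the genuinely new ingredient, and the step where the hypothesis $\mu(H)=\varepsilon$ for \emph{every} $H$ (rather than merely $\mu(H)\ge\varepsilon$) is indispensable: a substantially heavier edge would unavoidably be met many times. I would run the classical double-sampling argument. Draw an independent ghost multiset $S'$ of a further $m$ points, and let $A$ be the event that $\card{S\cap H}\ge t$ for some $H\in\Hi$, and $B$ the event that $\card{S\cap H}\ge t$ and $\card{S'\cap H}\le t/2$ for some $H\in\Hi$. For \emph{any} fixed $H$, the count $\card{S'\cap H}$ has the binomial distribution with parameters $m$ and $\varepsilon$, so Markov's inequality gives $\Pr[\card{S'\cap H}>t/2]\le\tfrac12$ provided $t\ge 4m\varepsilon$; picking, on the event $A$, an edge $H^{*}=H^{*}(S)$ with $\card{S\cap H^{*}}\ge t$ and using that $S'$ is independent of $S$, one obtains $\Pr[A]\le 2\Pr[B]$. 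Condition now on the $2m$-element multiset $Y=S\uplus S'$: by the Sauer--Shelah lemma the number of distinct traces $\{H\cap Y:H\in\Hi\}$ is at most $\sum_{i=0}^{d}\binom{2m}{i}\le(2m)^{d+1}$, and conditionally on $Y$ the multiset $S$ is a uniformly random $m$-element sub-multiset of $Y$. For each trace $W=H\cap Y$, the conditional probability of the relevant event (at least $t$ elements of $W$ landing in $S$ and at most $t/2$ landing in $S'$) is a hypergeometric upper-tail probability, which a Chernoff estimate bounds by $e^{-ct}$ for a universal constant $c>0$ (if $W$ is small, this is essentially the probability that $S$ absorbs almost all of $W$; if $W$ is large, that $S'$ absorbs almost none of it). A union bound over the traces gives $\Pr[B]\le(2m)^{d+1}e^{-ct}$, hence $\Pr[A]\le 2(2m)^{d+1}e^{-ct}$.

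It remains to fix the constants. Having chosen $C$ for the net, take $C_1$ large in terms of $C$ and $c$. The mean number of points of $S$ inside a given edge is $m\varepsilon=\Theta(d\ln(1/\varepsilon))$ in a) and $\Theta(d\ln(1/\varepsilon+1))$ in b), so $t\ge 4m\varepsilon$ holds once $C_1\gg C$; and then $ct$ dominates $(d+1)\ln(2m)+\ln 8$, which forces $\Pr[A]<\tfrac14$. The step I expect to be the main obstacle is this last inequality in part b), where $\varepsilon$ ranges over all of $(0,1)$ and $\ln d$ can be the dominant term inside $\ln(2m)$: this is exactly why the multiplicity bound in b) carries the extra factor $\ln d$, since $\ln d\cdot\ln(1/\varepsilon+1)\ge\tfrac{\ln 2}{2}(\ln d+\ln(1/\varepsilon+1))\gtrsim\ln(2m)$, whereas in a) the assumption $\varepsilon\le 1/d$ gives $\ln d\le\ln(1/\varepsilon)$ and no such factor is needed. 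Thus the real work is not any single estimate but keeping the Sauer--Shelah union bound quantitatively compatible with the multiplicity bounds $\maxmul$ and $C_1 d\ln d\ln(1/\varepsilon+1)$ uniformly over $\varepsilon$.
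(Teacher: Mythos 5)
Your proposal is correct and follows essentially the same route as the paper's proof: the Haussler--Welzl theorem for the transversal property, and for the multiplicity bound a double-sampling (ghost sample) argument with Markov's inequality giving $\Pr[A]\le 2\Pr[B]$, the Sauer--Shelah lemma to union-bound over traces of the combined sample, a hypergeometric tail estimate per trace, and the same observations that $\varepsilon\le 1/d$ makes $\ln d\le\ln(1/\varepsilon)$ in part a) while part b) needs the extra $\ln d$ factor. The only differences are cosmetic (your ghost-sample threshold is $t/2$ rather than the paper's $2\varepsilon N$, and you invoke a Chernoff-type bound where the paper computes the hypergeometric probability explicitly via Stirling).
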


The dual (and equivalent) version of Theorem~\ref{thm:coverbystrips} clearly 
follows from Theorem~\ref{thm:vcfewandmanyinedge}, since (using the uniform 
probability measure on $\Sek$) the measure of any strip of Euclidean half-width 
$w$ is $w$, and the VC-dimension of strips on $\Sek$ is bounded.

We also prove a similar result with essentially the same technique.

\begin{thm}\label{thm:vcfewinedge}
Let $X$ be a set,  $\Hi\subset 2^X$ a hypergraph 
on $X$ of VC-dimension at most $d\geq2$, and $N\geq 2$ an integer. Let $\mu$ be 
a probability measure on $X$ with $\mu(H)=1/N$ for each $H\in \Hi$.
Then one can find a multisubset of 
$N$ elements of $X$ (with 
multiplicity), such that 
each edge of $\Hi$ contains at most 
$Cd\frac{\ln N}{\ln{\ln{N}}}$ 
chosen points, where $C$ is a universal constant.
\end{thm}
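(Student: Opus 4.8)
The plan is to use the standard probabilistic "double sampling" / $\varepsilon$-net machinery, but instead of only controlling the lower tail (every edge is hit), we control the upper tail (no edge is hit too often). Let me think about the structure.

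We want $N$ points, each edge has measure $1/N$, so the expected number of chosen points in an edge is $1$. We want the maximum, over all edges, to be $O(d \ln N / \ln\ln N)$. The key is that there are at most $O(N^d)$ "essentially different" edges (by Sauer–Shelah applied to a sample), and for a single edge the number of hits is a $\mathrm{Bin}(N, 1/N)$ random variable, which exceeds $t$ with probability roughly $1/t!$. Setting $t \asymp \ln N / \ln\ln N$ makes $1/t! \le N^{-d-1}$ or so, and a union bound over the $O(N^d)$ edges finishes it. Let me think about how to make the "$O(N^d)$ edges" rigorous without a ground measure hypothesis — that's where double sampling comes in.

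Here is the plan in order. First I would fix the sampling scheme: draw $N$ i.i.d. points $S = (x_1, \dots, x_N)$ from $\mu$. Call $S$ \emph{bad} if some $H \in \Hi$ contains more than $t := \lceil C d \ln N / \ln\ln N \rceil$ points of $S$; I want to show $\Pr[S \text{ bad}] < 1$. Second, following the Haussler–Welzl argument, I introduce a second independent sample $T = (y_1, \dots, y_N)$ and compare the event "$H$ is over-covered by $S$" to the event "$H$ is over-covered by $S$ \emph{and} well-covered (covered about $N \cdot 1/N = 1$ times, in particular at least once or in the right range) by $T$". Since $\mathbb{E}[\,|H \cap T|\,] = 1$, for an $H$ with $|H \cap S| > t \ge 2$ the conditional probability that $|H\cap T|$ is, say, $\le t/2$ is bounded below by a constant (Chebyshev or a direct binomial estimate), so $\Pr[\exists H : |H\cap S| > t] \le 2 \Pr[\exists H : |H \cap S| > t,\ |H\cap T| \le t/2]$. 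Third — the crux — I condition on the multiset $S \cup T$ of $2N$ points and argue that the number of distinct sets $H \cap (S\cup T)$ is at most $(2N)^d$ by Sauer–Shelah (using VC-dimension $\le d$); then for a \emph{fixed} partition-respecting $H$, the points in $H \cap (S\cup T)$ are distributed into the $S$-slot and $T$-slot uniformly at random among the $\binom{2N}{N}$ placements, and the probability that more than $t$ of the (at most $2t$ relevant) points land in $S$ while $\le t/2$ land in $T$ is at most $2^{-\Omega(t)}$ — really, I should track that having $|H \cap (S\cup T)| = m$, the chance that $> t$ of them are in $S$ is at most $\binom{m}{>t} 2^{-m}$, which is tiny once $m$ is large, and the contribution of edges with small $m$ is handled directly. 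Fourth, a union bound: $(2N)^d \cdot 2^{-\Omega(t)} < 1$ as soon as $t \ge C' d \ln N$. But wait — that only gives $t \asymp d\ln N$, not $d \ln N / \ln\ln N$; the improvement comes from \emph{not} passing through the double-sample "$2^{-\Omega(t)}$" bound but instead, in the third step, using the sharper tail $\Pr[\mathrm{Bin}(N,1/N) > t] \le 1/t! \le (\mathrm{e}/t)^t$ directly on the original sample together with the Sauer–Shelah bound that (on the first sample alone, after the standard reduction) only $O(N^d)$ edges matter; then $(e/t)^t \le N^{-d-1}$ requires $t \ln(t/e) \ge (d+1)\ln N$, i.e. $t \asymp d \ln N / \ln\ln N$, since $\ln t \asymp \ln\ln N$. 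I would run the double-sampling reduction purely to justify replacing "$\sup$ over all of $\Hi$" by "$\sup$ over the $\le(2N)^d$ traces", and then apply the factorial tail.

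The main obstacle I anticipate is making the second-sample comparison compatible with the \emph{sharp} factorial tail bound rather than the lossy $2^{-\Omega(t)}$ bound — in the classical $\varepsilon$-net proof one is content with exponential-in-$t$ decay, but here the whole point is the $\ln N/\ln\ln N$ savings, which needs the super-exponential $t!$ in the denominator. Concretely, after conditioning on $S \cup T$ with $|H\cap(S\cup T)| = m$, I need that the conditional probability of $|H\cap S| > t$ decays like $\binom{m}{t}/2^m$ and that summing this against the at most $(2N)^d$ traces still beats $1$; the delicate case is $m$ in the range $[t, 2t]$, where $\binom{m}{t}/2^m$ is not small, so I must instead keep those edges attached to the original-sample factorial bound $\Pr[|H\cap S|>t] \le \sum_{j>t}\binom{N}{j}N^{-j}(1-1/N)^{N-j} \le \sum_{j>t} 1/j! \le 2/(t+1)!$ and only invoke double sampling to control the \emph{counting} of relevant edges. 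I would therefore structure the argument as: (i) Sauer–Shelah on a size-$N$ sample gives $\le (eN/d)^d$ relevant edge-traces with probability $1$; (ii) for each such trace, the factorial tail gives over-covering probability $\le 2/(t+1)!$; (iii) choose $t = \lceil C d\ln N/\ln\ln N\rceil$ so that $(eN/d)^d \cdot 2/(t+1)! < 1$, using $\ln((t+1)!) \ge (t+1)(\ln(t+1) - 1) \gtrsim t\ln\ln N$; (iv) conclude a valid multiset of $N$ points exists. The one genuinely technical lemma to nail down is step (i) — that after drawing $N$ points the relevant edges can be indexed by traces on those $N$ points — which is exactly the content of the double-sampling symmetrization, so I would either cite \cite{HaWe87} for that reduction or reproduce the two-page argument.
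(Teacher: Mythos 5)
Your diagnosis of the difficulty is exactly right: the symmetric double sample only yields a per-trace bound of order $2^{-\Omega(t)}$, which forces $t\asymp d\ln N$ and loses the $\ln\ln N$ saving. But your proposed repair --- invoke symmetrization ``only to control the counting of relevant edges'' via Sauer--Shelah, and then apply the binomial tail $\Pe[\mathrm{Bin}(N,1/N)>t]\le 1/t!$ to each of the $O(N^d)$ traces --- is circular and does not work. The traces are functions of the sample: once you condition on the sample (or on the combined double sample $Q$) in order to enumerate the at most $2(2N)^d$ sets $H\cap Q$, the quantity $\card{H\cap S}$ is no longer binomial; the only randomness left is the assignment of the points of $Q$ to the two blocks, and with blocks of equal size $N$ that assignment gives precisely the $\binom{m}{t}2^{-m}$-type bound you already identified as insufficient for $m$ near $t$. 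There is no version of the Haussler--Welzl reduction that lets you keep the i.i.d.\ factorial tail per trace while simultaneously using Sauer--Shelah to count the traces on the same sample; that combination is the naive union bound over the possibly infinite $\Hi$ in disguise.

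The paper closes this gap by making the double sample asymmetric: the ghost sample $Q_1$ has $N\ln N$ points rather than $N$, and the event compared against is that $Q_1$ \emph{misses} $H$ entirely, which still has conditional probability at least $(1-1/N)^{N\ln N}\ge N^{-2}$ given $\card{Q_0\cap H}>r$ --- a polynomial loss that is affordable. After conditioning on $Q$, a trace $L=H\cap Q$ with $\card{L}=t>r$ contributes only if all $t$ of its points fall into the block of size $N$ out of $N(\ln N+1)$, which has probability $\binom{N}{t}\big/\binom{N(\ln N+1)}{t}\le(\ln N)^{-r}=N^{-Cd}$; this is what beats the $2(N\ln N)^d$ trace count and is the true source of the $\ln N/\ln\ln N$ threshold. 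If you want to salvage your outline, replace your symmetric second sample by one of size $N\ln N$ and your ``covered at most $t/2$ times by $T$'' event by ``$T\cap H=\emptyset$''; your step (iii) arithmetic then goes through essentially as in the paper.
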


It was pointed out to us by Nabil Mustafa that 
Theorems~\ref{thm:vcfewandmanyinedge} and \ref{thm:vcfewinedge} can be obtained 
directly from results on epsilon approximations.

\section{Covering space with given homothets -- Proof of 
Theorem~\ref{thm:rogersallcases}}\label{sec:rogers}

\begin{rem}\label{rem:maydropsets}
Given a family $\FF$ of compact sets in $\Red$. 
We want to cover the space with translates of members of $\FF$.
The minimum covering density that we can reach does not change whether 
we require that we use every member of $\FF$, or we may use only a subfamily.
Indeed, once we have a desired covering using a sub-family, we can take a 
zero-density arrangement of the rest of the members of $\FF$.
\end{rem}

In case (\ref{item:rogerstranslates}) of Theorem~\ref{thm:rogersallcases}, 
there is a subfamily of $\FF$ which consists of essentially translates of $K$. 
The proof of case (\ref{item:rogerstranslates}) now easily follows from 
Remark~\ref{rem:maydropsets}.

%\begin{thm}\label{thm:rogerswithhomothets}
%Let $K$ be a convex body in $\Red$, and let 
%$\FF=\{\lambda_1 K, \lambda_2K, \dots \}$ ($0<\lambda_i \le 1$) be a 
%family of its homothets with 
%\[\sum_{i=1}^{\infty}{\lambda_i ^d}=\infty.\]
%Then if the set $\{\lambda_1,\lambda_2,\ldots\}$ has a limit point other than 
%zero, then $\FF$ permits a covering of space of density $\vartheta(K)$. 
%Otherwise, $\FF$ permits a covering of space of density one.
%\end{thm}

We make some preparations for the proof of case (\ref{item:rogersminis}).

\begin{defn} A collection $\mathcal{V}$ of Lebesgue-measurable subsets of 
$\Red$ is a \emph{regular family} if there is a constant $C$ for which 
$\diam(V)^d \le C \vol V$ holds for every $V\in \mathcal{V}$. 
\end{defn}

\begin{defn} A collection $\mathcal{V}$ of subsets of $\Red$ is a 
\emph{Vitali-covering} of $E\subseteq \Red$, if for every $x\in E$ and 
$\delta>0$, there is an element $U$ of $\mathcal{V}$ such that $x\in U$ and 
$0<\diam(U)<\delta$.
\end{defn}

We recall Vitali's covering theorem \cite{Vi08}.
\begin{thm}\label{thm:vitali}
Let $E\subset \Red$ be a measurable set with finite Lebesgue-measure, and 
let $\mathcal{V}$ be a regular family of closed subsets of $\Red$ that 
is a Vitali covering for $E$. Then there is a finite or countably infinite 
subcollection $\{U_j\}\subseteq \mathcal{V}$ of disjoint sets such that 
\[\vol{E \setminus \bigsqcup_j{U_j}}=0.\]
\end{thm}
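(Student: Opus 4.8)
The plan is to prove the theorem by a greedy extraction, using the factor‑two ``almost‑largest diameter'' rule. First I would reduce to a convenient setting. Since $E$ has finite Lebesgue measure, outer regularity gives an open set $O\supseteq E$ with $\vol{O}<\infty$. Discard from $\mathcal{V}$ every set not contained in $O$ and every set of diameter greater than $1$; the remaining family is still a regular Vitali covering of $E$, because for $x\in E\subseteq O$ and $\delta$ smaller than the distance from $x$ to $\bd O$, any $U\in\mathcal{V}$ with $x\in U$ and $\diam(U)<\delta$ automatically lies in $O$. So I may assume each member of $\mathcal{V}$ is a closed subset of $O$ of diameter at most $1$.

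Next, the greedy step. Pick $U_1\in\mathcal{V}$ arbitrarily, and having chosen pairwise disjoint $U_1,\dots,U_n\in\mathcal{V}$, put
\[
d_n:=\sup\{\diam(U)\st U\in\mathcal{V},\ U\cap U_j=\emptyset\text{ for } j=1,\dots,n\}.
\]
If this supremum is over the empty family, then $E\setminus\bigcup_{j\le n}U_j$ must be empty: it is relatively open in $E$ (the $U_j$ are closed), so any point of it would, by the Vitali property, lie in a member of $\mathcal{V}$ disjoint from $U_1,\dots,U_n$ — a contradiction — and the proof is finished. Otherwise $0<d_n\le 1$, and I choose $U_{n+1}\in\mathcal{V}$ disjoint from $U_1,\dots,U_n$ with $\diam(U_{n+1})>d_n/2$. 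If this process never terminates, the $U_j$ are pairwise disjoint subsets of $O$, so $\sum_j\vol{U_j}\le\vol{O}<\infty$; hence $\vol{U_j}\to 0$, and regularity gives $\diam(U_j)\to 0$, whence $d_n\le 2\diam(U_{n+1})\to 0$.

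It remains to verify $\vol{E\setminus\bigsqcup_j U_j}=0$. Fix $N$ and take $x\in E\setminus\bigcup_{j\le N}U_j$. By the Vitali property there is $U\in\mathcal{V}$ with $x\in U$, $\diam(U)>0$, disjoint from $U_1,\dots,U_N$. Since $d_n\to 0$ while $\diam(U)>0$, the set $U$ must meet some $U_k$ with $k>N$; let $k$ be the least such index. Then $U$ is disjoint from $U_1,\dots,U_{k-1}$, so $\diam(U)\le d_{k-1}<2\diam(U_k)$; choosing $p_k\in U_k$ and $y\in U\cap U_k$ gives $|x-p_k|\le\diam(U)+\diam(U_k)<3\diam(U_k)$, so $x$ lies in the closed ball $\hat U_k$ of radius $3\diam(U_k)$ about $p_k$, which has volume $3^d\omega_d\diam(U_k)^d\le 3^d\omega_d C\vol{U_k}$, where $\omega_d$ is the volume of the unit ball and $C$ the regularity constant. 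Thus $E\setminus\bigcup_{j\le N}U_j\subseteq\bigcup_{k>N}\hat U_k$, so by countable subadditivity
\[
\vol{E\setminus\textstyle\bigsqcup_j U_j}\le\vol{E\setminus\textstyle\bigcup_{j\le N}U_j}\le 3^d\omega_d C\sum_{k>N}\vol{U_k},
\]
and letting $N\to\infty$ the right-hand side vanishes since $\sum_j\vol{U_j}<\infty$.

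The one genuinely delicate ingredient is the factor‑two rule in the greedy step: it is exactly what forces, for a set $U$ surviving the first $k-1$ rounds, the bound $\diam(U)<2\diam(U_k)$, which in turn lets the enlarged balls $\hat U_k$ ($k>N$) cover the leftover part of $E$ while their total volume is still controlled by the tail of the convergent series $\sum_j\vol{U_j}$. The rest is point‑set topology (finite unions of closed sets are closed) together with regularity and countable subadditivity of Lebesgue measure.
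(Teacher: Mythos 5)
The paper does not actually prove this statement; it is recalled verbatim with a citation to Vitali's 1908 paper, so there is no in-text argument to compare against. Your proposal is a correct reconstruction of the standard modern proof (a variant of the $5r$-covering argument), and it is self-contained and sound: the reduction via outer regularity to a finite-measure open $O$ and diameters $\le 1$; the greedy "half-the-sup" extraction and the observation that pairwise disjointness inside $O$ forces $\sum_j\vol{U_j}<\infty$, hence $\diam(U_j)\to 0$ by regularity and $d_n\to 0$; the key containment $E\setminus\bigcup_{j\le N}U_j\subseteq\bigcup_{k>N}\hat U_k$ obtained from the triangle inequality and the factor-two rule; and the final estimate $\vol{\hat U_k}\le 3^d\omega_d C\vol{U_k}$, which lets the tail of a convergent series kill the leftover measure.

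Two cosmetic remarks rather than gaps. First, your termination alternative should really be phrased as "$d_n=0$" rather than "the supremum is over the empty family," since in principle $\mathcal{V}$ could contain singletons (they satisfy the regularity condition vacuously); the same argument — that any survivor $x$ would, by the Vitali property, yield a positive-diameter set disjoint from $U_1,\ldots,U_n$ — then shows $E\setminus\bigcup_{j\le n}U_j=\emptyset$ in that case too. Second, in the displayed chain you write $\vol{E\setminus\bigsqcup_j U_j}\le\vol{E\setminus\bigcup_{j\le N}U_j}\le 3^d\omega_d C\sum_{k>N}\vol{U_k}$; the middle quantity should of course be compared to $\sum_{k>N}\vol{\hat U_k}$ first, which is what the sentence preceding the display says — this is just a skipped intermediate step in the display, not an error. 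Neither point affects correctness.
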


\begin{proof}[Proof of (\ref{item:rogersminis}) of 
Theorem~\ref{thm:rogersallcases}]

We may assume that $\vol{K}=1$ and fix an $\varepsilon_0>0$.

For a subcollection $\GG$ of $\FF$
we denote by $\mathcal{G}_{\varepsilon}$ the subset of those elements 
of $\GG$ in which the ratio of homothety does not exceed $\varepsilon$.

Now, for every $\varepsilon>0$, the total volume in $\FF_{\varepsilon}$ is 
infinite. A bit more is true: for any subfamily $\GG$ of $\FF$ of infinite 
total volume, and for every $\varepsilon>0$, the total volume in 
$\GG_{\varepsilon}$ is infinite.

We partition 
\[\FF_{\varepsilon_0}=\big(\bigsqcup_i{\mathcal{A}_i}
\big)\bigsqcup\big(\bigsqcup_j{ \mathcal{
B}_j}\big)\] 
into countably many sub-collections, so that the total volume in each 
$\mathcal{A}_i$ and in each $\mathcal{B}_j$ is infinite.

We will cover most of the cube $[0,1]^d$ by a subfamily of $\mathcal{A}_1$, in 
which the sum of the volumes is at most $(1+\varepsilon_0\diam(K))^d$, and the 
rest of the cube by a subfamily of $\mathcal{B}_1$, in which the sum of the 
volumes is at most $\varepsilon_0$. If we can achieve this for any 
$\varepsilon_0>0$, the density bound for the whole space clearly follows.

Again, we partition 
\[\mathcal{A}_1=\bigsqcup_j{\mathcal{C}_j}\] into countably 
many subcollections, so that the total volume in each $\mathcal{C}_j$ is 
infinite.

%Using Theorem~\ref{thm:nmsufficientcovering}, 
Using Theorem~\ref{thm:KcovKcor}
%\cite{Na10}*{Theorem~1.3}
for every $j\in \mathbb{N}$, we 
can cover the cube $[0,1]^d$ by the translates of the elements 
$(\mathcal{C}_{j})_{\frac{1}{j}}$. Since we use homothets of a fixed 
convex body, $K$, the union (over $j$) of these coverings is clearly 
a regular Vitali covering. 

Therefore we can apply Theorem~\ref{thm:vitali}.
There is a subcollection $\{U_l\}$ of disjoint sets for which
\[\vol{[0,1]^d\setminus \bigsqcup_l{U_l}}=\vol E=0.\]

Next, we will cover $E$ by a subcollection of $\mathcal{B}_1$, in which the sum 
of the volumes is at most $\varepsilon_0$. We partition 
$\mathcal{B}_1=\bigcup_l{\mathcal{D}_l}$ 
into countably many subcollection, each of infinite total volume. 

Since $\vol E=0$, for every $\varepsilon'>0$ there is collection 
$\mathcal{E}=\{K_1,K_2, \dots, \}$ of homothets of $K$ so that $E\subseteq 
\bigcup_l{K_l}$ and $\sum_{l}{\vol{K_l}}\le \varepsilon'$.

Note that $({\mathcal{D}_l})_{\varepsilon}$ is of infinite total volume for any 
$\varepsilon>0$.  
%Thus, using Theorem\ref{thm:KcoversK}, 
%Thus, using \cite{Na10}*{Theorem~1.3},
Thus, using Theorem~\ref{thm:KcovKcor}
for each $l$, we can cover $K_l$ 
by translates of members of a subfamily of $\mathcal{D}_l$ of total volume at 
most $C\vol{K_l}$ for some constant $C>0$. If $\varepsilon^{\prime}$ is small 
enough, we obtain a covering of $E$ of total volume $\varepsilon_0$, as 
promised.
\end{proof}

Case (\ref{item:rogersmaxis}) of Theorem~\ref{thm:rogersallcases} clearly 
follows from the following statement.

\begin{thm}\label{thm:bigcover}
Let $K$ be a convex body, and let 
$\FF=\{\lambda_1 K, \lambda_2 K, \dots \}$ be a family of its homothets so 
that the $\lambda_i$-s are not bounded. Then $\FF$ permits a 
translative covering of $\Red$ so that every point is covered at most $4d$ 
times.

Moreover, if $K$ is smooth at the points of 
intersection of $K$ with supporting hyperplanes that are parallel to one of the 
$d$ coordinate hyperplanes, then $\FF$ permits a translative covering of 
$\Red$ so that every point is covered at most $2d$ times.
\end{thm}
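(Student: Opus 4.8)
The plan is to reduce the problem to a one-dimensional covering by slabs. Since the ratios $\lambda_i$ are unbounded, pass to a subfamily and pick huge homothets; by Remark~\ref{rem:maydropsets} we may discard the rest. The idea is to use the large homothets to cover $\Red$ in ``shells'' or, more precisely, to cover thin slabs $\Re^{d-1}\times[t,t+1]$ one coordinate at a time. Concretely, fix the last coordinate direction $e_d$. For a convex body $K$, any translate $x+\lambda K$ that we place will meet the horizontal slab $S_t:=\Re^{d-1}\times\{x_d=t\}$ in a $(d-1)$-dimensional convex set. The first step is to observe that as $\lambda\to\infty$, a single translate of $\lambda K$, intersected with a unit-thickness slab, looks (after rescaling) like a cylinder over a $(d-1)$-dimensional body, so in effect we are covering $\Re^{d-1}$ by translates of a $(d-1)$-dimensional convex body. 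Inducting on the dimension, at each stage the covering multiplicity picks up a bounded contribution.

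The cleanest route is probably the following inductive construction. Order the coordinate directions $e_1,\dots,e_d$. Using homothets so large that their ``width'' in direction $e_d$ dwarfs everything, tile $\Red$ into slabs of the form $\{a\le x_d\le b\}$ and cover each slab using a bunch of these large translates, arranged so that each slab is covered with multiplicity $1$ in the $e_d$-direction (a one-dimensional interval cover of $\Re$ by intervals can be made $1$-fold, or $2$-fold if we want to allow arbitrary interval lengths and still cover — here we get to choose the translation vectors and can use a greedy/interval-scheduling argument, so $1$-fold suffices if the lengths can be matched, $2$-fold in general). Inside each such slab, the cross-sections of the chosen translates are $(d-1)$-dimensional convex sets that, after normalization, are translates (or near-translates, up to the curvature of $\partial K$) of a single $(d-1)$-dimensional convex body. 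We then recurse: cover the $(d-1)$-dimensional slab with multiplicity at most (by induction) $4(d-1)$ in terms of the cross-sectional covering, plus the factor $1$ or $2$ from the current direction. Summing over the $d$ directions gives $2d$ in the smooth case and $4d$ in general; the ``$2d$ vs $4d$'' dichotomy comes precisely from whether, at the point where $K$ touches a supporting hyperplane parallel to a coordinate hyperplane, $K$ is smooth — smoothness guarantees the cross-sections of a large homothet really do converge to honest translates of a fixed $(d-1)$-body (so a $1$-fold cover in that direction is attainable), whereas a vertex or edge there forces us to accept a $2$-fold overlap near the ``ends'' of each slab.

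The main obstacle I expect is making precise the claim that a huge homothet $\lambda K$, sliced by a slab of fixed thickness, yields cross-sections that are uniformly close to translates of a fixed $(d-1)$-dimensional convex body $K'$, \emph{and} doing this uniformly enough that the recursively constructed covering genuinely covers all of $\Red$ without leaving gaps at slab boundaries. Near a supporting hyperplane $H$ of $K$ parallel to $e_d^\perp$, the body $\lambda K$ intersected with a slab of thickness $1$ is, after scaling, governed by the local geometry of $\partial K$ at $H\cap K$: if that is a single smooth point, the rescaled slices Hausdorff-converge to the single face $F=H\cap K$ scaled appropriately, and one can organize a $1$-fold tiling in the $e_d$ direction; if $K$ has a facet or edge there, the slices stabilize but one must glue two ``orientations'' and this is where the second overlap enters. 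Handling the gap-free requirement at slab interfaces is a compactness/continuity argument: choose the slab thickness small relative to the modulus of continuity of the slices, then the greedy interval cover in each direction has no gaps. Once these geometric lemmas are in place, the multiplicity bookkeeping — $\prod$ or rather $\sum$ over $d$ directions of a ``$1$ or $2$'' per direction — is routine and gives exactly $2d$ and $4d$.
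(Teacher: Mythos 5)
Your slab-and-recursion scheme is a genuinely different route from the paper's (the paper covers the shells $Q_{k+1}\setminus Q_k$ between nested cubes, using for each shell exactly $2d$ enormous homothets, one pressed flat against each facet of $Q_k$; smoothness at the coordinate-touching points is what lets a single huge homothet contain the whole bounded box $Q_{k+1}\cap H_{i,+}$ while protruding less than $\varepsilon$ back past $H_i$, and the $2d$/$4d$ count comes from ``at most $d$ per generation, at most $2$ consecutive generations'' rather than from a sum over coordinate directions). Unfortunately your version has a gap that I do not see how to close. The decomposition of $\Red$ into slabs $\Re^{d-1}\times[t,t+1]$ (or of any fixed thickness) is incompatible with using unboundedly large homothets: a translate of $\lambda K$ has extent of order $\lambda$ in the $e_d$-direction, so it cannot be confined to, or made ``responsible for,'' a single slab; it necessarily meets $\sim\lambda$ of them, and the claim that the covering is $1$-fold (or $2$-fold) ``in the $e_d$-direction'' cannot be arranged by an interval-scheduling argument on the projections, since the projections are forced to be huge intervals, not intervals matching the slab thickness. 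Moreover each slab $\Re^{d-1}\times I$ is unbounded and each translate covers only a bounded portion of it, so infinitely many translates are needed per slab, and their mutual overlaps \emph{within} the slab are exactly what must be controlled --- your bookkeeping only accounts for overlaps \emph{across} directions.

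The cylinder approximation is also not available where you need it. In the middle of $\lambda K$ the unit-thickness slices are indeed nearly constant, but there the body sticks far out of the slab in the $e_d$-direction (the problem above). Near the top of $\lambda K$, where the body could plausibly be confined to a region of bounded $e_d$-extent, the slices are not near-translates of a fixed $(d-1)$-body at any fixed scale: for the ball of radius $\lambda$, the cross-section at depth $s$ below the north pole has radius $\sqrt{2\lambda s}+o(1)$, so within a unit slab the slices range from a point to a set of diameter $\sim\sqrt{\lambda}$. This degeneration at the ``ends'' is precisely where your slab covering would have gaps, and filling them is where uncontrolled multiplicity enters. Finally, your account of where smoothness is used (convergence of slices to translates) does not match what smoothness actually buys in this problem: its role is to guarantee that a huge homothet flattens out near the touching point of a coordinate-parallel supporting hyperplane, so that one translate can cover a prescribed wide, shallow box while penetrating arbitrarily little beyond its flat side --- which is what makes the disjointness conditions (opposite facets, and generations two apart) achievable. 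I would recommend abandoning the slab decomposition in favor of covering bounded annular regions between nested cubes (or, in the non-smooth case, between nested homothets of the polytope $L$ produced by Lemma~\ref{lem:smoothpoints}).
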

Clearly, if an affine image of $K$ has the special property that 
'coordinate-hyperplane touching points' are smooth, then the $2d$ bound on the 
covering multiplicity also follows.

\begin{proof}[Proof of Theorem~\ref{thm:bigcover} in the second case]

Fix $\varepsilon>0$. We may assume that $\FF$ has an element $\mu_0 K$ so 
that $Q_0=[-\varepsilon,\varepsilon]^d\subseteq \mu_0 K \subseteq [-1,1]^d$. We 
present an algorithm to produce the desired covering. We  will define 
inductively a sequence of cubes $Q_i$ ($i\in \mathbb{N}$), which are centered 
at 
the origin and have side length at least $i$, a sequence of translation vectors 
$x_1^1, x_1^2, \dots x_1^{2d}, x_2^1, \dots, x_2^{2d}, x_3^1, \dots$, and a 
sequence $\mu_1^1 K, \mu_1^2 K, \dots \mu_1^{2d} K, \mu_2^1 K, \dots, 
\mu_2^{2d} K, \mu_3^1 K, \dots$ of elements of $\FF$ so that the following hold
with the convention $x_0^j=0$ and $\mu_0^j=\mu_0$:
\begin{enumerate}
\item $Q_k\subseteq \bigcup_{i=0}^k\bigcup_{j=1}^{2d}{x_i^j+\mu_i^j K}$ for 
$k\in \mathbb{N}$ 
\item $\left(\bigcup_{i=0}^k\bigcup_{j=1}^{2d}{x_i^j+\mu_i^j K}\right) 
+B(0,\varepsilon) \subseteq Q_{k+1}$ for $k\in \mathbb{N}$
\item $\left(x_i^j +\mu_i^j K \right)\cap \left(x_i^{j+d} +\mu_i^{j+d} 
K\right)=\emptyset$ for $1\le j 
\le d$
\item $\left(x_i^j+\mu_i^j K\right) \cap \left(x_l^m+\mu_l^m 
K\right)=\emptyset$ if $|i-l|\ge 2$.
\end{enumerate}

\begin{figure}[t]
\includegraphics[width=.35\textwidth]{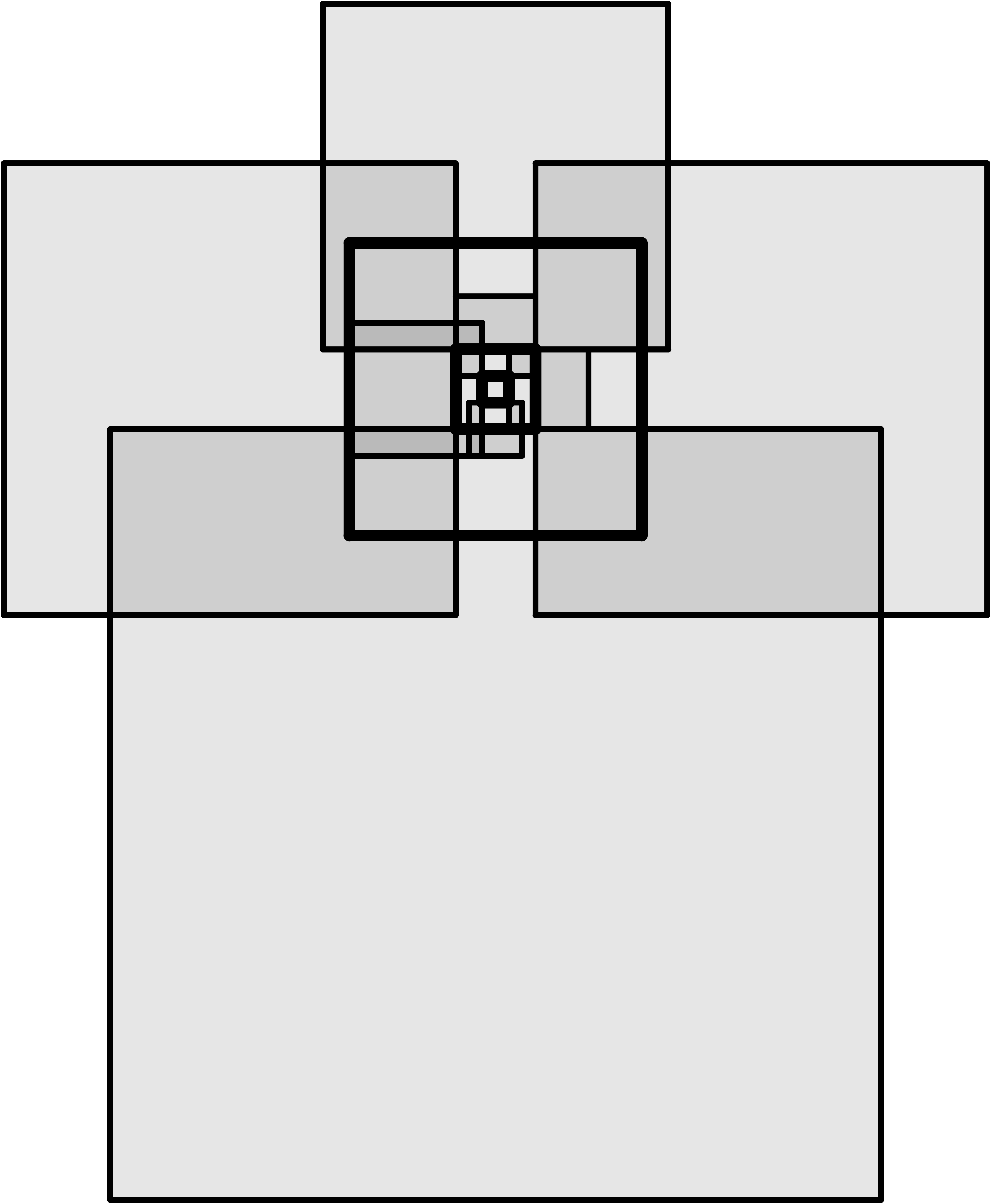}
\caption{The squares with bold edges are $Q_1,Q_2$ and $Q_3$ (counting from 
inside out).}\label{fig:squares}
\end{figure}

Indeed, assume that we found the $x_i^j$-s, $\mu_i ^j$-s and $Q_i$-s 
for $i\le k$. Choose $Q_{k+1}$ so that  
\[\left(\bigcup_{i=0}^k\bigcup_{j=1}^{2d}{x_i^j+\mu_i^j K}\right) 
+B(0,\varepsilon) 
\subseteq Q_{k+1}.\]

Let $H_i$ denote the support hyperplane of the $i$-th facet of $Q_{k}$, and 
$H_{i,+}$ the half-space bounded by $H_i$ that does not contain $Q_k$. Since 
the set of $\lambda_i$-s is unbounded, by the smoothness of $K$ at the touching 
points with the coordinate hyperplanes, we can choose 
a so far unused element $\mu_{k+1}^i K$ of 
$\FF$, and a translation 
vector $x_{k+1}^i$ such that 
\[Q_{k+1}\cap H_{i,+}\subseteq x_{k+1}^i+\mu_{k+1}^i K\]
and 
\[(x_{k+1}^i+\mu_{k+1}^i K) \cap (x_{k-1}^j+\mu_{k-1}^j K)=\emptyset\]
for all $j$.

Also we have that if $H_i$ $(i\le d)$ and $H_{i+d}$ support opposite sides of 
$Q_{k+1}$ then \[(x_{k+1}^i +\mu_{k+1}^i K) \cap (x_{k+1}^{i+d} 
+\mu_{k+1}^{i+d} 
K)=\emptyset,\] and \[Q_{k+1}\setminus Q_k \subseteq  \bigcup_{i=1}^{2d} 
{x_{k+1}^i +\mu_{k+1}^i K}.\]
Hence we can find the desired $Q_i$-s and translates.

Since $Q_i$ has side length at least $i$, 
\[\Red=\bigcup_{i=1}^{\infty}\bigcup_{j=1}^{2d}{x_i^j+\mu_i^j K}.\]

Property (3) ensures that, the subfamily $\bigcup_{i=1}^{2d}{x_{k}^i+\mu_k^i 
K}$ 
covers every point at most $d$ times, and property (4) yields that every 
point of $\mathbb{R}^n$ is covered by at most two subfamilies 
$\bigcup_{i=1}^{2d}{x_{k}^i+\mu_k^i K}$, which finishes the proof.
\end{proof}

\begin{rem} 
At first, one may believe that, by some approximation argument, 
the condition of smoothness can be dropped in Theorem~\ref{thm:bigcover}.
Unfortunately, this is not the case, the standard argument does not work. 

Let $K$ be a convex body in $\Red$ and $\FF=\{\lambda_1 K, 
\lambda_2 K, \dots\}$ a family of its homothets, such that the coefficients 
$\lambda_i$-s are not bounded. Let $L$ be a convex body with 
smooth boundary such that $L\subseteq K\subseteq (1+\varepsilon) L$. Consider 
the family $\FF^{\prime}=\{\lambda_1 L, \lambda_2 L, \dots\}$, and follow the 
steps of the proof of the smooth case in Theorem~\ref{thm:bigcover} for 
$\FF^{\prime}$.

We obtain a covering $x_i^j+\lambda_i^j L$ of $\Red$, where every point is 
covered at most $2d$ times. Then $x_i^j+(1+\varepsilon)\lambda_i^jL$ is also a 
covering. However, it may happen that $x_i^j+(1+\varepsilon)\lambda_i^jL$ 
covers every 
point infinitely many times: If $\lambda_i^k$ is sufficiently large, then 
$x_i^j+(1+\varepsilon)\lambda_i^j K$ may contain $B(0,i)$ for all $i$. 
\end{rem}

\begin{proof}[Proof of Theorem~\ref{thm:bigcover} in the general case]

We leave the proof of the following Lemma to the reader as an exercise. 
\begin{lem}\label{lem:smoothpoints}
Let $K\subseteq \Red$ be a convex body. Then there exist $j\le 2d$ points 
$\{x_1,x_2, \dots x_j\}$ on the boundary of $K$, so that $K$ is smooth in $x_i$ 
($1\le i \le j$) and $\bigcap_i {H_{i+}}=L$ is a bounded convex set with 
non-empty interior, where $H_{i+}$ is the half-space that contains $K$, bounded 
by the tangent hyperplane $H_i$ at $x_i$.
\end{lem}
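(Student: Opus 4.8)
The plan is to reduce the statement, by polarity, to facts about extreme and exposed points. Since every hypothesis and the conclusion are translation invariant, I would first translate $K$ so that $0\in\inter K$, and pass to the polar body $K^{\circ}=\{y\in\Red\st\iprod{y}{x}\le 1\text{ for all }x\in K\}$, which is again a convex body with $0$ in its interior. The key is the polarity dictionary relating smoothness of $K$ to exposed points of $K^{\circ}$: for $x\in\partial K$ the supporting hyperplanes of $K$ at $x$ are in natural bijection with the points of the face $F(K^{\circ},x)$, so $K$ is smooth at $x$ exactly when $F(K^{\circ},x)$ is a single point; and a boundary point $y$ of $K^{\circ}$ is an exposed point of $K^{\circ}$ precisely when $F(K^{\circ},\xi)=\{y\}$ for some direction $\xi$, so rescaling $\xi$ to lie on $\partial K$ exhibits a point $x\in\partial K$ at which $K$ is smooth and whose outer normal points in the direction $y$. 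In short: the exposed points of $K^{\circ}$ are, up to the rescaling $y\mapsto y/\abs{y}$, exactly the outer unit normals of the smooth points of $\partial K$.

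Granting this, it suffices to produce $j\le 2d$ exposed points $y_{1},\dots,y_{j}$ of $K^{\circ}$ with $0\in\inter\conv\{y_{1},\dots,y_{j}\}$. Indeed, let $x_{1},\dots,x_{j}\in\partial K$ be corresponding smooth points with outer unit normals $u_{i}=y_{i}/\abs{y_{i}}$. For every $w\neq 0$ some $\iprod{y_{i}}{w}>0$, hence some $\iprod{u_{i}}{w}>0$, so $0\in\inter\conv\{u_{1},\dots,u_{j}\}$; this is exactly the condition that $L=\bigcap_{i}H_{i+}$ (the intersection of the supporting half-spaces at the $x_{i}$) be bounded, and since $L\supseteq K$ it has non-empty interior. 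To get the $y_{i}$: $K^{\circ}$ is the convex hull of its extreme points (Minkowski's theorem), so $0\in\inter\conv(\operatorname{ext}K^{\circ})$; by Straszewicz's theorem the exposed points of $K^{\circ}$ are dense in $\operatorname{ext}K^{\circ}$, and since a convex set and its closure have the same interior, $0$ still lies in the interior of the convex hull of the set of exposed points of $K^{\circ}$; finally Steinitz's theorem (the interior-point sharpening of Carathéodory's theorem) extracts $j\le 2d$ exposed points whose convex hull already contains $0$ in its interior.

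The step that genuinely needs care is the polarity dictionary: checking that \emph{every} exposed point of $K^{\circ}$ comes from an honest point of $\partial K$ at which $K$ is smooth, with the outer normal in the correct direction, and conversely --- here $0\in\inter K$ is used so that all supporting hyperplanes of $K$ avoid the origin and the various rescalings are legitimate. Everything after that is a routine assembly of Minkowski's theorem, Straszewicz's theorem, and Steinitz's theorem, together with the elementary fact that a finite family of outer half-spaces has bounded intersection iff the corresponding normals have $0$ in the interior of their convex hull. As a reality check, for $K=[-1,1]^{d}$ the polar $K^{\circ}$ is the cross-polytope, its $2d$ vertices are exposed and surround the origin, and they correspond to the $2d$ facet normals $\pm e_{i}$ of the cube --- so the bound $j\le 2d$ is attained.
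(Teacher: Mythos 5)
The paper does not actually supply a proof of Lemma~\ref{lem:smoothpoints}: it says ``we leave the proof of the following Lemma to the reader as an exercise.'' So there is nothing to compare your argument against, and the only question is whether it is correct. It is. The polarity dictionary you invoke is accurate and worth spelling out once: with $0\in\inter K$, for $x\in\partial K$ the set $F(K^\circ,x)=\{y\in K^\circ:\iprod{y}{x}=1\}$ is exactly the set of $y\in\partial K^\circ$ whose associated hyperplane $\{z:\iprod{y}{z}=1\}$ supports $K$ at $x$; hence $K$ is smooth at $x$ iff $F(K^\circ,x)$ is a singleton $\{y\}$, in which case $y$ is exposed (by direction $x$), and conversely, normalizing an exposing direction $\xi$ of an exposed point $y$ so that $h_{K^\circ}(\xi)=1$ puts $\xi$ on $\partial K$ as a smooth point with outer normal $y$. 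Note that the map $x\mapsto y$ need not be injective (all smooth points of a facet share one normal), but you only use surjectivity onto the exposed points, which holds. The chain Minkowski $\Rightarrow$ Straszewicz $\Rightarrow$ ``convex set and its closure share an interior'' $\Rightarrow$ Steinitz then cleanly produces $j\le 2d$ exposed points with $0$ in the interior of their convex hull, and the recession-cone criterion shows $L=\bigcap_i H_{i+}$ is bounded (and $L\supseteq K$ gives non-empty interior). The bound $2d$ is exactly the Steinitz bound, and your cube/cross-polytope example shows it can be attained, so nothing is lost. An alternative, slightly more hands-on route would avoid polarity: the set of outer unit normals at smooth points of $\partial K$ is dense in $\Sed$ (this is equivalent, via polarity, to Straszewicz, or can be derived from the almost-everywhere differentiability of $\partial K$), and one can then pick $2d$ smooth points with normals close enough to $\pm e_1,\dots,\pm e_d$ to surround the origin; but your argument is complete and correct as written.
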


Let $L$ be the polytope obtained in Lemma~\ref{lem:smoothpoints}. We may assume 
that $L$ contains the origin. Let $\varepsilon>0$ be fixed. We may also assume 
that $\FF$ has an element $\mu_0 K$ so that $-L_0=-\varepsilon L=\subseteq 
\mu_0 K \subseteq -L$. 

Similarly to the proof of the smooth case, we can inductively define a sequence 
$-\alpha_1 L, -\alpha_ 2L, \dots $ of 
homothets of $-L$, a sequence of translation vectors $x_1^1, x_1^2, \dots 
x_1^{2d}, x_2^1, \dots, x_2^{2d}, x_3^1, \dots$ and a sequence $\mu_1^1 K, 
\mu_1^2 K, \dots \mu_1^{2d} K, \mu_2^1 K, \dots, 
\mu_2^{2d} K, \mu_3^1 K, \dots$ of members of 
$\FF$, so that $\alpha_i\ge i$ and the following hold:

\begin{enumerate}
\item $-L_k\subseteq \bigcup_{i=0}^k\bigcup_{j=1}^{2d}{x_i^j+\mu_i^j K}$ for 
$k\in \mathbb{N}$ 
\item $\big(\bigcup_{i=0}^k\bigcup_{j=1}^{2d}{x_i^j+\mu_i^j K}\big) 
+B(0,\varepsilon) \subseteq -L_{k+1}$ for $k\in \mathbb{N}$
\item $\left(x_i^j+\mu_i^j K\right) \cap \left(x_l^m+\mu_l^m 
K\right)=\emptyset$ if $|i-l|\ge 2$.
\end{enumerate}

Now, the general case of Theorem~\ref{thm:bigcover} easily follows.
\end{proof}

\subsection{
\texorpdfstring{
Covering $K$ by its homothets}{Covering K by its homothets}}\label{subsec:KcovK}

\begin{thm}\label{thm:KcoversK}
For any $\varepsilon>0$, 
dimension $d$ and any convex body $K$ of volume one in $\Red$ with $o\in \inter 
K$, if a family $\FF$ of positive 
homothets of $K$ has total volume at least 
\[
d^2\left\lceil \frac{-\log{\varepsilon}}{\log{1+\varepsilon}} 
\right\rceil \vartheta(K)\frac{\vol{K-K}}{\vol K}
+\left(1+\frac{\varepsilon}{2}\right)^d 2^d\frac{\vol{K}}{\vol{K\cap K}}.
\]
then $\FF$ permits a translative covering of $K$.
\end{thm}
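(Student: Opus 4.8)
The plan is to build the covering of $K$ in two stages: first cover a slightly shrunk copy of $K$ using a carefully chosen geometric scale of homothets via a greedy/layered argument, then use the remaining (large-scale) homothets to patch the thin shell near the boundary. Write $\delta=\varepsilon$ and set $s=\left\lceil \frac{-\log\varepsilon}{\log(1+\varepsilon)}\right\rceil$. First I would partition the homothets of $\FF$ according to their ratio $\lambda$ into the dyadic-like classes $\FF^{(t)}=\{\lambda K\in\FF \st (1+\varepsilon)^{-t-1}<\lambda\le (1+\varepsilon)^{-t}\}$ for $t\ge 0$, together with the class $\FF^{(\infty)}$ of homothets with $\lambda>1$ (the ``big'' ones). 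Since the total volume is at least the stated bound, by pigeonhole among the first $s$ classes $\FF^{(0)},\dots,\FF^{(s-1)}$ at least one of them, say $\FF^{(t_0)}$, has total volume at least $\frac{1}{s}\big(d^2 s\,\vartheta(K)\frac{\vol(K-K)}{\vol K}\big)=d^2\vartheta(K)\frac{\vol(K-K)}{\vol K}$, unless instead the big class $\FF^{(\infty)}$ carries the mass $\big(1+\frac\varepsilon2\big)^d 2^d\frac{\vol K}{\vol(K\cap K)}$ — but we in fact want to keep both reservoirs, so I would split the volume budget at the outset: reserve the first summand for the small classes and the second for $\FF^{(\infty)}$.

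The main step is: using the homothets in one class $\FF^{(t_0)}$, all of which have nearly equal ratio $\lambda_0\approx(1+\varepsilon)^{-t_0}$, cover the body $(1-c\varepsilon)K$ (or rather $\frac{1}{1+\varepsilon/2}K$, to leave room). Here the idea is that homothets of a single fixed size $\lambda_0 K$ tile/cover space with density $\vartheta(K)$ by definition of $\vartheta(K)$ (Rogers); to cover the specific bounded region $\frac{1}{1+\varepsilon/2}K$ we only need finitely many translates, and the number of translates of $\lambda_0 K$ needed, times $\lambda_0^d$, is at most roughly $\vartheta(K)\cdot\big(\text{measure of a }\lambda_0\text{-neighborhood of }\tfrac{1}{1+\varepsilon/2}K\big)$. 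The $\frac{\vol(K-K)}{\vol K}$ factor and the $d^2$ factor come in exactly here: bounding the ``overflow'' of a density-$\vartheta(K)$ covering restricted to a region costs a factor controlled by $K-K$ (the shape of the ``reach'' of a translate meeting the region), and the factor $d$ (twice) absorbs the loss from the fact that within a class the ratios vary by a factor $(1+\varepsilon)$ and from the boundary layer thickness; I would make this precise by a volume-comparison between $\frac{1}{1+\varepsilon/2}K$ plus a homothetic collar and $K$ itself, using $\vol\big(A+\lambda_0(K-K)\big)\le \vol(A)+\text{(lower order)}$ and the crude bound that the collar has volume $\le c\varepsilon d\,\vol K$ for $\varepsilon$ not too large — this is where one must be a little careful, and it is the step I expect to be the main obstacle, namely getting the constants and the $d^2$ (as opposed to $d$ or $d^3$) to come out right while only assuming the homothets in $\FF^{(t_0)}$ are roughly equal, not exactly equal.

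Finally, the thin shell $K\setminus \frac{1}{1+\varepsilon/2}K$ is contained in a union of $2^d$ translates of $\frac{\varepsilon}{2}(1+\varepsilon/2)$-scaled copies of $K$ (cover the shell by small boxes / corner pieces; concretely, cover $K$ by $2^d$ homothets of ratio close to $\frac12$ centered near the ``corners'', and the shell is hit by the parts of these near $\bd K$), and each such small piece has volume $\le \big(\frac{\varepsilon}{2}\big)^d\big(1+\frac\varepsilon2\big)^d\frac{\vol K}{\vol(K\cap K)}\cdot(\dots)$ — here I would instead argue directly: the shell is covered by $2^d$ translates of $c\varepsilon K$, and since $\FF^{(\infty)}$ has total volume at least $\big(1+\frac\varepsilon2\big)^d2^d\frac{\vol K}{\vol(K\cap K)}$, and every homothet in $\FF^{(\infty)}$ has ratio $>1$ hence can by a single translate cover any translate of $K$ (in particular any translate of $c\varepsilon K\subseteq$ a translate of $K$), we can greedily assign $2^d$ of the big homothets, one to each shell piece. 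The factor $\frac{\vol K}{\vol(K\cap K)}$ and the $\big(1+\frac\varepsilon2\big)^d$ are slack that makes the greedy assignment possible even when the big homothets are only slightly larger than $K$; in the symmetric case $K=-K$ one has $\vol(K\cap K)=\vol K$ and $\vol(K-K)=\vol(2K)=2^d\vol K$, recovering the cleaner bound, and feeding the general-position bound $\vol(K-K)\le\binom{2d}{d}\vol K$ (Rogers–Shephard) and $\vol(K\cap K)\ge \vol K/\binom{2d}{d}$-type estimates into Theorem~\ref{thm:KcoversK} yields Theorem~\ref{thm:KcovKcor} after choosing $\varepsilon$ (e.g.\ $\varepsilon=1/d$, so $s=O(d\ln d)$).
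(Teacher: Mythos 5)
Your plan misses the central case and misconstrues the role of the second summand, so it does not go through as stated.

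First, the trichotomy you set up (moderate classes $\FF^{(0)},\dots,\FF^{(s-1)}$ vs.\ the ``big'' class $\FF^{(\infty)}$) is not exhaustive: a third bucket, homothets with ratio $\lambda\le\varepsilon$, can carry essentially all of the total volume, in which case no $\FF^{(t_0)}$ among $t_0<s$ is large and $\FF^{(\infty)}$ may be empty. This is exactly the hard case, and it is the one the paper handles with a dedicated tool, a slight generalization of \cite{Na10}*{Theorem~1.3} (stated in the paper as Theorem~\ref{thm:nmsufficientcovering}): if a family of homothets of $K$ with ratios bounded by $\lambda_1<1$ has total volume at least $2^d\frac{\vol{L+\lambda_1\frac{K\cap(-K)}{2}}}{\vol{K\cap(-K)}}$, then it permits a translative covering of $L$. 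The factor $\frac{\vol K}{\vol{K\cap(-K)}}$ (which appears in the theorem statement as $\vol{K\cap K}$, a typo) and the factor $\left(1+\tfrac{\varepsilon}{2}\right)^d$ both come from instantiating that lemma with $L=K$ and $\lambda_1\le\varepsilon$, so that $K+\lambda_1\frac{K\cap(-K)}{2}\subseteq\left(1+\tfrac{\varepsilon}{2}\right)K$. Your proposal attributes these factors to a greedy patching of the boundary shell by ``big'' homothets, which is not where they come from, and more to the point your approach contains no mechanism at all for exploiting small homothets.

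Second, the logic around $\FF^{(\infty)}$ is self-undermining. If any homothet has ratio $\ge1$, then a single translate of it already covers $K$ and the whole argument is unnecessary; thus one may and should assume from the outset that all $\lambda_i<1$, as the paper does. With that reduction, $\FF^{(\infty)}=\emptyset$ and your shell-patching step has nothing to work with. (Also, reserving a reservoir of total volume $\left(1+\tfrac{\varepsilon}{2}\right)^d 2^d\frac{\vol K}{\vol{K\cap(-K)}}$ in $\FF^{(\infty)}$ is immaterial: a single member would suffice, so the volume threshold plays no role there.)

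Your first stage — selecting a tight cluster $\FF^{(t_0)}$ of nearly-equal homothets and using a density-$\vartheta(K)$ covering restricted to a bounded region — is in the right spirit and corresponds to the paper's first case, where the authors invoke the Rogers--Zong bound \cite{RZ97}: $K$ can be covered by at most $\frac{\vol{K-\mu_1 K}\,\vartheta(K)}{\vol{\mu_1 K}}$ translates of $\mu_1 K$, and a cluster with ratios within a factor $(1+\varepsilon)^{1/d}$ and with total $d$-th-power-of-ratio at least $(1+\varepsilon)\vartheta(K)\frac{\vol{K-K}}{\vol K}$ has enough members to realize this. Note that this covers all of $K$ directly, so no shrinking and no shell patching is needed. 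The paper's dichotomy is then: either such a tight cluster exists (case 1), or, summing over the $c(\varepsilon)=d\left\lceil\frac{-\log\varepsilon}{\log(1+\varepsilon)}\right\rceil$ scale windows between $\varepsilon^d$ and $1$, the total volume carried by ratios $>\varepsilon$ is less than the first summand, so the ratios $\le\varepsilon$ carry at least the second summand, and Theorem~\ref{thm:nmsufficientcovering} finishes the job. To repair your proposal you would need to replace the $\FF^{(\infty)}$ branch by a lemma of that type for the small-homothet regime; without it, the proof has a genuine gap.
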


Theorem~\ref{thm:KcovKcor} clearly follows from this result. Indeed, we choose 
$\varepsilon=\frac{1}{d}$, and recall two facts. First, that there is a point 
$x\in K$ such that $\vol{K\cap (2x-K)}\ge \frac{1}{2^d}\vol K$. And second, 
that 
by \cite{RoS57}, $\vol{K-K}\leq \binom{2d}{d}\vol K$.

\begin{proof}[Proof of Theorem~\ref{thm:KcoversK}] 

First, we restate \cite{Na10}*{Theorem~1.3} in a slightly more general form 
than the original, which is easily obtained from the proof therein. The proof 
there easily yields this form. 
\begin{thm}\label{thm:nmsufficientcovering}
Let $K$ and $L$ be convex bodies in $\Red$ with $o \in \inter 
K$, and $\FF=\{\lambda_1 K, \lambda_2K,\ldots \}$ be a family of its 
homothets with $0<\lambda_i \leq\lambda_1<1$. Assume that 
\[\sum_{i=1}^{\infty}{\lambda_i^d}\ge 
2^d\frac{\vol{L+\lambda_1\frac{K\cap(-K)}{2}}}{\vol{K\cap (-K)}}.\] Then $\FF$ 
permits a translative covering of $L$.
\end{thm}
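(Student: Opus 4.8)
\textbf{Plan for the proof of Theorem~\ref{thm:nmsufficientcovering}.}
The statement to prove is the restatement of \cite{Na10}*{Theorem~1.3} in the slightly generalized form: given convex bodies $K, L$ with $o \in \inter K$ and homothets $\lambda_i K$ with $0 < \lambda_i \le \lambda_1 < 1$ satisfying
\[
\sum_{i=1}^\infty \lambda_i^d \;\ge\; 2^d\,\frac{\vol{L + \lambda_1 \frac{K \cap (-K)}{2}}}{\vol{K \cap (-K)}},
\]
the family $\FF = \{\lambda_i K\}$ permits a translative covering of $L$. Since the excerpt asserts that this follows from the proof in \cite{Na10} "easily" — the original statement presumably being the special case $L = K$ or with $K \cap (-K)$ replaced by some fixed inscribed symmetric body — my plan is to reproduce that argument, keeping $L$ arbitrary throughout and keeping the symmetric body $B := \frac{K \cap (-K)}{2}$ (a centrally symmetric convex body with $o$ in its interior) explicit in the bookkeeping.

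The core idea is a greedy/volumetric packing-covering argument. First I would reduce to the symmetric body: since $\lambda_i B \subseteq \lambda_i \frac{K - K}{2}$ and, more usefully, a translate of $\lambda_i K$ contains the corresponding translate of $\lambda_i B$, it suffices to arrange the translates so that the translates of $\lambda_i B$ already cover $L$; but the translates of $\lambda_i K$ are the sets we are allowed to move, so I must be careful: the standard trick is that $x + \lambda_i K \supseteq x' + \lambda_i B$ for a suitable recentering $x'$ depending only on $x$ and $\lambda_i$. So the problem becomes: cover $L$ by translates of the symmetric bodies $\lambda_i B$. Now run the following greedy procedure. Process the homothets in decreasing order of size $\lambda_1 \ge \lambda_2 \ge \cdots$. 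Maintain the uncovered part $L_j \subseteq L$ of $L$ after placing the first $j$ translates. At step $j$, if $L_{j-1} \neq \emptyset$, pick a point $p \in L_{j-1}$ and place $\lambda_j B$ centered at $p$ (using the recentering so this is a legitimate translate of $\lambda_j K$); this removes from the uncovered set at least the set $(p + \lambda_j B) \cap L_{j-1}$. The key packing estimate: the points $p$ chosen can be taken so that the bodies $p + \frac{\lambda_j}{?} B$ — shrunk copies — are disjoint, or more directly, one shows by a volume/covering-density argument that the total $K$-volume consumed before $L$ is fully covered is controlled. This is where $\vartheta(K)$ would enter in the other theorem; here, with the greedy-at-decreasing-scale approach, the relevant bound is instead the elementary one: when you place $p + \lambda_j B$ at an uncovered point, every previously-placed body $p_i + \lambda_i B$ ($i<j$, so $\lambda_i \ge \lambda_j$) that meets $p + \lambda_j B$ has its center $p_i$ inside $p + (\lambda_i + \lambda_j) B \subseteq p + 2\lambda_i B$; summing volumes and using that the chosen centers at a fixed scale are "spread out" (no center lies in another's body, since each new center is chosen uncovered) gives the factor $2^d$.

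Concretely, the heart of the argument is the inequality that if $L$ is \emph{not} covered after exhausting $\FF$, then $\sum_i \vol{\lambda_i B} < 2^d \vol{L + \lambda_1 B}$, contradicting the hypothesis (after translating volumes of $B$ to volumes of $K \cap (-K) = 2B$ via $\vol{\lambda_i B} = \lambda_i^d \vol B = \lambda_i^d 2^{-d}\vol{K\cap(-K)}$, which is exactly how the $2^d$ and the $\vol{K\cap(-K)}$ appear). To get this: suppose $L$ is never fully covered; then the greedy process uses every member of $\FF$, each time deleting a nonempty chunk, and each chosen center $p_j$ lies in $L$ and is \emph{not} in any earlier $p_i + \lambda_i B$. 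Consider the bodies $p_j + \lambda_j B$. I claim the translates $p_j + \tfrac12 \lambda_j B$ are pairwise disjoint: if $i < j$ and $(p_i + \tfrac12\lambda_i B) \cap (p_j + \tfrac12 \lambda_j B) \neq \emptyset$ then $p_j \in p_i + \tfrac12(\lambda_i + \lambda_j)B \subseteq p_i + \lambda_i B$ (using $\lambda_j \le \lambda_i$ and central symmetry of $B$), contradicting that $p_j$ is uncovered at step $j$. These disjoint bodies all lie in $L + \tfrac12\lambda_1 B$, so $\sum_j \vol{\tfrac12 \lambda_j B} \le \vol{L + \tfrac12 \lambda_1 B}$, i.e. $\sum_j \lambda_j^d \vol{B} \le 2^d \vol{L + \tfrac12\lambda_1 B} = 2^d \vol{L + \lambda_1 \frac{K\cap(-K)}{2}} / ?$ — here I must match the normalization: $\vol B = 2^{-d}\vol{K \cap (-K)}$ and $\tfrac12\lambda_1 B = \tfrac14 \lambda_1 (K\cap(-K))$, so I may instead avoid the shrinking and argue directly with $p_j + \lambda_j B$ disjoint after noting each new center avoids the \emph{full} earlier bodies — giving $\sum_j \lambda_j^d \vol{B} \le \vol{L + \lambda_1 B}$ and hence $\sum_j \lambda_j^d \le 2^d \vol{L + \lambda_1 \frac{K\cap(-K)}{2}}/\vol{K\cap(-K)}$, contradicting the hypothesis. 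Thus $L$ \emph{is} covered, completing the proof.

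The main obstacle, and the only genuinely delicate point, is the \textbf{recentering step}: translating "cover $L$ by translates of $\lambda_i B$" back into "cover $L$ by translates of $\lambda_i K$" while preserving that the \emph{same} point $p$ can serve as the reference for disjointness. Since $B = \frac{K\cap(-K)}{2}$ sits inside $K$ after the translation $x \mapsto x$ (because $o \in \inter K$ forces $o \in \inter(K \cap (-K))$, but $B$ is $K\cap(-K)$ scaled by $1/2$ about $o$, so $B \subseteq K \cap (-K) \subseteq K$ only if... one must actually check $\tfrac12(K\cap(-K)) \subseteq K$, which holds since $o \in K$ and $K$ convex: $\tfrac12(K\cap(-K)) \subseteq \tfrac12 K + \tfrac12\{o\} \subseteq K$). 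Hence $p + \lambda_i B \subseteq p + \lambda_i K$ with \emph{no} recentering — the inclusion is free — and the disjointness argument for the $B$-bodies transfers verbatim to certify the covering by the $K$-bodies. I should also double-check the edge cases ($L$ or some $\lambda_i K$ of measure zero — excluded since these are convex bodies; $\sum \lambda_i^d$ possibly infinite — then trivially the hypothesis holds and the greedy process never terminates without covering, so by a compactness/exhaustion argument $L$ is covered). I will state these checks briefly and refer to \cite{Na10} for the identical details.
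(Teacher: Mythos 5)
The paper gives no proof of this theorem beyond the remark that it is \cite{Na10}*{Theorem~1.3} in a slightly generalized form, ``easily obtained from the proof therein''; your greedy--volumetric strategy is indeed that proof's strategy, but your bookkeeping contains a genuine error at the decisive step. Write $C:=K\cap(-K)$, so your $B=\tfrac12 C$ and the hypothesis reads exactly $\sum_i\vol{\lambda_i B}\geq\vol{L+\lambda_1 B}$ --- there is no room to lose a factor of $2^d$. Because you reduce at the outset to ``cover $L$ by translates of $\lambda_i B$'', the only information your process gives at step $j$ is $p_j-p_i\notin\lambda_i B$ for $i<j$. From this, only the half-sized translates $p_i+\tfrac{\lambda_i}{2}B$ are pairwise disjoint, which yields $\sum_i\lambda_i^d\leq 4^d\vol{L+\tfrac{\lambda_1}{4}C}/\vol{C}$; by Brunn--Minkowski this bound is \emph{implied by}, not contradicted by, the hypothesis, so the proof does not close. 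Your proposed patch --- that the full translates $p_i+\lambda_i B$ are already disjoint ``since each new center avoids the full earlier bodies'' --- is false: $p_j-p_i\notin\lambda_i B$ does not give $(p_i+\lambda_i B)\cap(p_j+\lambda_j B)=\emptyset$, which would require $p_j-p_i\notin(\lambda_i+\lambda_j)B$; two equal translates of a symmetric body whose centers barely avoid one another overlap substantially.

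The repair is to not discard $K$. The covering sets are the full homothets $p_i+\lambda_i K$, and $C=K\cap(-K)\subseteq K$ trivially (your recentering worry, and the detour through $\tfrac12(K\cap(-K))\subseteq K$, are unnecessary --- an intersection with $K$ lies in $K$). Define the uncovered set by deleting $p_i+\lambda_i K$ at each step. Then for $i<j$ one has $p_j-p_i\notin\lambda_i K\supseteq\lambda_i C=2\lambda_i B\supseteq(\lambda_i+\lambda_j)B$, so the \emph{full} translates $p_i+\lambda_i B$ are pairwise disjoint and contained in $L+\lambda_1 B$, giving $\sum_i\lambda_i^d\leq 2^d\vol{L+\lambda_1\tfrac{C}{2}}/\vol{C}$ whenever the process fails to cover $L$ --- exactly the stated threshold. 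Two smaller points also need a word: an infinite family need not admit a non-increasing enumeration, so one should first extract a sorted subfamily of sufficient total volume (possible since either $\lambda_i\to0$ or infinitely many $\lambda_i$ exceed some $t>0$); and the case of equality in the hypothesis requires an extra remark, since disjoint bodies of total volume equal to that of the container produce no contradiction on their own.
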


We fix $\varepsilon>0$. Now, we are given $\FF=\{\lambda_1 K, \lambda_2K,\ldots 
\}$ with 
$0<\lambda_i<1$ for all $i$. 
First, we consider the case when there is a subfamily 
$\FF^{\prime}=\{\mu_1 K, \mu_2 K, \dots \}$ of $\FF$ in which 
\[(1+\varepsilon)^{-1}\le \frac{\mu_i^d}{\mu_j^d}\le 
(1+\varepsilon)
\] 
for all $i$ and $j$, and 
\[\sum_{i=1}^{\infty}{\mu_i^d}\ge \vartheta(K) 
(1+\varepsilon)\frac{\vol{K-K}}{\vol K}.
\] 
In this case, $\FF^{\prime}$ has at least 
$\frac{1}{\mu_1^d(1+\varepsilon)}\vartheta(K) 
(1+\varepsilon)\frac{\vol{K-K}}{\vol K}=\frac{1}{\mu_1^d}
\vartheta(K)\frac{\vol{K-K}}{\vol K}$ members.

We may assume that $\mu_1$ is the smallest homothety ratio in $\FF^{\prime}$.
By the main result of \cite{RZ97}, we can cover $K$ by at most 
$\frac{\vol{K -\mu_1 K}\cdot \vartheta(K)}{\vol{\mu_1 K}}$ translates 
of $\mu_1 K$. The statement of the Theorem in this case easily follows.

Next, we assume that there is no such subfamily $\FF^{\prime}$. Consider the 
intervals 
\[(\varepsilon^d, \varepsilon^d(1+\varepsilon)], 
(\varepsilon^d(1+\varepsilon), \varepsilon^d(1+\varepsilon)^2], \dots, 
(\varepsilon^d(1+\varepsilon)^{c(\varepsilon)-1}, 
\varepsilon^d(1+\varepsilon)^{c(\varepsilon)}],
\] where 
$c(\varepsilon)=d\left\lceil \frac{-\log{\varepsilon}}{\log{1+\varepsilon}} 
\right\rceil$. 
Since 
$\varepsilon^d(1+\varepsilon)^{c(\varepsilon)}\ge 1$, we have 
\[\sum_{\lambda_i K\in \mathcal{F}, 
\lambda_i^d>\varepsilon^d}{\lambda_i^d}
<c(\varepsilon)d(1+\varepsilon)\vartheta(K)\frac{\vol{K-K}}{\vol K}.
\]
This implies that there exists a subfamily $\FF^{\prime}=\{\mu_1 K,\mu_2 K, 
\dots \}$, in 
which \[\mu_i^d \le \varepsilon^d\] and \[\sum_{i=1}^{\infty}{\mu_i^d}\ge 
\left(1+\frac{\varepsilon}{2}\right)^d 2^d\frac{\vol{K}}{\vol{K\cap(- K)}}.\] 
Then, by Theorem~\ref{thm:nmsufficientcovering}, $\FF^{\prime}$ permits a 
translative 
covering of $K$.
\end{proof}

\section{Multiple covering -- Proof of 
Theorem~\ref{thm:multiplecovspace}}\label{sec:multiple}

\begin{defn}
Let $\FF$ be a family of subsets of a base set $X$, and $k\in\Ze^+$. 
The \emph{$k$-fold covering number} of $\FF$, denoted by $\tau_k(\FF)$, is the 
minimum cardinality of a 
multi-subfmaily of $\FF$ such that each point of $X$ is contained in at least 
$k$ (with multiplicity) members of the subfamily.
\end{defn}
We recall that a \emph{fractional covering} of $X$ by $\FF$ is a mapping $w$ 
from $\FF$ to 
$\Re^+$ with $\sum_{x\in F\in\FF}w(F)\geq 1$ for all $x\in X$.
The total weight of a fractional covering is denoted by 
$w(\FF):=\sum_{F\in\FF}w(F)$, and its infimum is the 
\emph{fractional covering number} of $\FF$:
\[
\tau^\ast(X,\FF):=\inf\{w(\FF)\st w:\FF\rightarrow\Re^+ \mbox{ is a 
fractional-covering of } X\}. 
\]
For more on (fractional) coverings, cf. \cite{Fu88} in the abstract 
(combinatorial) setting and \cite{PaAg95} and \cite{Ma02} in the geometric 
setting.

We will use the following simple combinatorial statement.
\begin{lem}
Let $\FF$ be a family of subsets of a base set $X$ of fractional covering 
number $\tau^\ast:=\tau^\ast(\FF)$, 
and $k\in\Ze^+$. Then
\[
\tau_k\leq 
\left\lceil \tau^\ast \left( 
k+\frac{3}{2}\ln|X|+\frac{3}{2}\sqrt{(4k+\ln|X|)\ln|X|}\right)\right\rceil
\leq
\lceil 6\tau^\ast\max\{\ln |X|, k\}\rceil.
\]
\end{lem}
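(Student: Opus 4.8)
The plan is to use the standard probabilistic argument for turning a fractional cover into an integral (multi-)cover, but carried out carefully enough to track the multiplicity $k$ and to get the explicit constants in the statement. Fix an optimal (or near-optimal) fractional covering $w:\FF\to\Re^+$ of total weight $\tau^\ast$, and interpret $p(F):=w(F)/\tau^\ast$ as a probability distribution on $\FF$. I would then sample $m$ members $F_1,\dots,F_m$ of $\FF$ independently according to $p$, where $m$ is a parameter to be fixed at the end, and ask that the resulting multi-family be a $k$-fold cover of $X$. For a fixed point $x\in X$, the number of sampled sets containing $x$ is a sum $S_x=\sum_{i=1}^m \mathbf 1[x\in F_i]$ of i.i.d.\ Bernoulli variables with success probability $q_x:=\sum_{x\in F\in\FF}p(F)\ge 1/\tau^\ast$, so $\mathbb E S_x = m q_x \ge m/\tau^\ast$.

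The key step is a Chernoff-type lower-tail bound: $\Pr[S_x < k] \le \Pr[S_x \le k]$, and for a binomial with mean $\mu_x = m q_x$ one has $\Pr[S_x \le k] \le e^{-\mu_x}(e\mu_x/k)^k$ (or the cleaner form $\Pr[S_x\le (1-\delta)\mu_x]\le e^{-\delta^2\mu_x/2}$ when $k\le\mu_x$). Taking a union bound over the $|X|$ points, the sampled family fails to be a $k$-fold cover with probability at most $|X|\cdot \max_x \Pr[S_x\le k]$; it suffices to choose $m$ so that this is $<1$. Writing $m = \tau^\ast t$, the condition becomes roughly $|X| e^{-t}(et/k)^k < 1$, i.e.\ $t - k\ln(et/k) > \ln|X|$. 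Solving this inequality for $t$ is the only real computation: setting $t = k + \tfrac32\ln|X| + \tfrac32\sqrt{(4k+\ln|X|)\ln|X|}$, one checks that $k\ln(et/k)\le \tfrac12(t-k) + \tfrac12\cdot\frac{(k)(\text{something})}{\cdots}$ — more precisely one uses $\ln z \le \tfrac{z}{e} $ type estimates, or the inequality $k\ln(et/k)\le \tfrac{t}{2}$ once $t$ is large enough relative to $k$, to reduce $t-k\ln(et/k)>\ln|X|$ to a quadratic inequality in $\sqrt{t}$ (or in $\sqrt{\ln|X|}$) that the stated value of $t$ satisfies with equality-to-spare. Then $\tau_k \le \lceil m\rceil = \lceil \tau^\ast t\rceil$, which is the first inequality.

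For the second, cruder inequality I would simply bound $t$: when $k\ge \ln|X|$, each of the three terms $k$, $\tfrac32\ln|X|$, $\tfrac32\sqrt{(4k+\ln|X|)\ln|X|}$ is at most a constant times $k$ (the square root is at most $\tfrac32\sqrt{5k\cdot k}<\tfrac{7}{2}k$), so $t\le 6k$; symmetrically, when $k\le\ln|X|$ the same three terms are each at most a constant times $\ln|X|$ (the square root is at most $\tfrac32\sqrt{5\ln|X|\cdot\ln|X|}<\tfrac72\ln|X|$), giving $t\le 6\ln|X|$. In either regime $t\le 6\max\{\ln|X|,k\}$, hence $\tau_k\le\lceil 6\tau^\ast\max\{\ln|X|,k\}\rceil$.

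The main obstacle is purely the arithmetic of verifying that the explicitly given value of $t$ satisfies $|X|e^{-t}(et/k)^k<1$ (equivalently $t - k\ln(et/k) > \ln|X|$); the probabilistic skeleton is completely routine. One has to be a little careful that the bound $\Pr[S_x\le k]\le e^{-\mu_x}(e\mu_x/k)^k$ is being applied in the regime $k\le\mu_x$ where it is monotone and genuinely useful — and to note that if $\tau^\ast\le 1$ or $|X|$ is tiny the statement is trivial, and that $\mu_x$ can only be larger than $m/\tau^\ast\ge t$, which by choice of $t$ exceeds $k$, so we are always in the good regime. Completing the square in $\sqrt{\ln|X|}$ after the substitution is where the somewhat unmotivated-looking coefficients $\tfrac32$ and $\tfrac{3}{2}$ and the $4k$ inside the radical come from; I would present that computation as a short lemma-internal estimate rather than spelling out every line.
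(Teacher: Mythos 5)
Your proposal follows the paper's approach exactly: sample $m$ members of $\FF$ independently from the distribution $w(F)/\tau^\ast$, bound the probability that a fixed $x\in X$ is hit fewer than $k$ times by a lower-tail Chernoff inequality, and take a union bound over the $|X|$ points. The paper commits to the multiplicative form $\Pe(\xi<k)\le\exp\bigl(-(m-k\tau^\ast)^2/(3m\tau^\ast)\bigr)$, which with $m=\tau^\ast t$ reduces to the clean quadratic inequality $(t-k)^2>3t\ln|X|$ satisfied (with room to spare) by the stated $t$; your sketch drifts between this and the Poisson-style tail bound $e^{-\mu}(e\mu/k)^k$, whose transcendental condition $t-k\ln(et/k)>\ln|X|$ does not produce the explicit formula as cleanly, but the probabilistic skeleton and the final crude estimate $t\le 6\max\{\ln|X|,k\}$ both match the paper.
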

The proof is a standard probabilistic argument.
\begin{proof}
Let $w$ be a fractional covering of $X$ by $\FF$ of total weight 
$\tau^\ast:=\tau^\ast(\FF)$,
and let $m=\left\lceil \tau^\ast \left( 
k+\frac{3}{2}\ln|X|+\frac{3}{2}\sqrt{(4k+\ln|X|)\ln|X|}\right)\right\rceil$.

We pick $m$ members of $\FF$ randomly, independently with the same 
distribution: at each draw, each member $F$ of $\FF$ is
picked with probabilty $w(F)/w(\FF)$. For a fixed $x\in X$, the probability 
that $x$ is not covered at least $k$ 
times by the selected family is at most $\Pe(\xi<k)$, where 
$\xi=\xi_1+\ldots+\xi_m$, with independent 
random Bernouli (ie., 0/1--valued) variables $\xi_1,\ldots,\xi_m$, each of 
expectation $1/\tau^\ast$.
By Chernoff's inequality, $\Pe(\xi<k)\leq 
\exp\left(-\frac{(m-k\tau^\ast)^2}{3m\tau^\ast}\right)$. 
Thus, $\Pe(\mbox{there is an } x\in X \mbox{ which is not covered})\leq 
|X|\exp\left(-\frac{(m-k\tau^\ast)^2}{3m\tau^\ast}\right)$.
The lemma now clearly follows.
\end{proof}

%\begin{claim}Let $Y$ be a set, $\FF$ a family of subsets of $Y$, and 
%$X\subseteq Y$. Let $\Lambda$ be a finite subset of $Y$ and $\Lambda\subseteq 
%U\subseteq Y$. Assume that for another family $\FF'$ of subsets of $Y$ 
%we have $\tau_k(X,\FF)\le \tau_k(\Lambda,\FF')$. Then 
%\[\tau_k(X,\FF)\le \tau_k(\Lambda, \FF^{\prime})\le \lceil 
%6\tau^{\ast}(U,\FF^{\prime}) \max\{\ln|\Lambda|, k\}\rceil\]
%\end{claim}

For two sets $K$ and $L$ in $\Red$, we define $N_k(L,K)$, the \emph{$k$-fold 
covering number} of $K$ by $L$ as the minimum number of translates of $L$ that 
cover $K$ $k$-fold. Note that $N_k(L,K)=\tau_k(\FF)$, where $\FF=\{(x+K)\cap L 
\st x\in\Red\}$. We also define the \emph{fractional covering number} of $K$ by 
$L$ as $N^{\ast}(L,K)=\tau^{\ast}(\FF)$.

By \cite{AS}*{Theorem~1.7}, we have .
\begin{equation}\label{eq:easyfraccovering}
\max\left\{\frac{\vol{L}}{\vol{K}},1\right\}\le N^{\ast}(L,K)\le 
\frac{\vol{L-K}}{\vol{K}}
\end{equation}
for any Borel measurable sets, $K$ and $L$ is $\Red$.

The same proof as \cite{AS}*{Theorem~1.6} (or, \cite{N15}*{Theorem~1.2}) yields
\begin{thm}\label{thm:multiplecovgeneral}
Let $K$, $L$ and $T$ be bounded Borel measurable sets in 
$\Red$ and let $\Lambda\subset  \Red$ be a finite set with 
$L\subseteq \Lambda+T$. Then \[N_k(L,K)\le \lceil 6 N^{\ast}(L-T,K\thicksim 
T)\max\{\ln|\Lambda|,k\} \rceil.\]
If $\Lambda\subset K$, then we have \[N_k(L,K)\le \lceil 6 
N^{\ast}(L,K\thicksim T)\max\{\ln|\Lambda|,k\} \rceil.\]
\end{thm}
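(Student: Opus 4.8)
The plan is to combine the $k$-fold-to-fractional covering lemma with a covering-numbers argument that goes back to the Artstein-Avidan--Slomka ``theorem-by-epsilon-net'' technique. First I would recall the setup: we want to $k$-fold cover $L$ by translates of $K$, and we are handed a finite set $\Lambda$ with $L\subseteq\Lambda+T$. The idea is to relay the covering through $\Lambda$: cover each shifted copy of the ``test body'' $T$ sitting at the points of $\Lambda$ using translates of $K$, by means of the set $K\sim T$ (the set of $x$ for which $x+T$ meets $K$, i.e. $K\sim T=\{x : (x+T)\cap K\neq\emptyset\}=K-T$ in the usual notation, whichever convention the paper fixes). Concretely, for each $\lambda\in\Lambda$ I would want a family of translates of $K$ that together cover $\lambda+T$; the efficiency of doing this simultaneously for all $\lambda$ is governed by the fractional covering number $N^\ast(L-T,K\sim T)$ (resp.\ $N^\ast(L,K\sim T)$ in the case $\Lambda\subseteq K$).

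The key steps, in order, would be: (1) Set up the auxiliary hypergraph / fractional covering problem on the finite base set $\Lambda$: to each translate $x+K$ associate the set of $\lambda\in\Lambda$ such that $(x+K)$ covers the whole cell $\lambda+T$, or more precisely work with the containment $\lambda+T\subseteq x+K$, which translates to $x\in$ (some fixed translate of) $K\sim T$ shifted by $\lambda$. (2) Observe that a fractional covering of $\Lambda$ in this hypergraph of total weight $\tau^\ast$ exists with $\tau^\ast\le N^\ast(L-T,K\sim T)$ — this is where the hypothesis $L\subseteq\Lambda+T$ is used, so that covering all the cells covers $L$, and where \eqref{eq:easyfraccovering} and a translation-averaging argument (integrating the indicator of the good translations over $\Red$) give the bound on $\tau^\ast$. (3) Apply the Lemma (the $\tau_k\le\lceil 6\tau^\ast\max\{\ln|X|,k\}\rceil$ bound) with $X=\Lambda$, so $|X|=|\Lambda|$, yielding a multiset of at most $\lceil 6 N^\ast(L-T,K\sim T)\max\{\ln|\Lambda|,k\}\rceil$ translates of $K$ that $k$-fold cover every cell $\lambda+T$, hence $k$-fold cover $L$. (4) For the second inequality, redo step (2) under the extra assumption $\Lambda\subseteq K$: then one may arrange the translation vectors to lie in a controlled region and the factor $\vol(L-T)$ improves to $\vol(L)$, giving $N^\ast(L,K\sim T)$ in place of $N^\ast(L-T,K\sim T)$.

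Since the excerpt explicitly says ``the same proof as \cite{AS}*{Theorem~1.6} (or, \cite{N15}*{Theorem~1.2}) yields'' this statement, I would be fairly terse and mostly cite those proofs, adapting only the places where multiplicity $k$ enters: namely, replacing the single-covering probabilistic step by the $k$-fold version packaged in the Lemma above. So the write-up is essentially: reduce to a fractional covering estimate on the finite set $\Lambda$ exactly as in \cite{AS}, then feed that $\tau^\ast$ bound into the Lemma instead of into the classical (single-cover) Chernoff bound.

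The main obstacle I expect is getting the fractional covering estimate in step (2) clean — in particular matching the two conventions for $K\sim T$ and the two cases ($\Lambda$ arbitrary vs.\ $\Lambda\subseteq K$), and making sure the averaging argument that produces a fractional (not integral) covering of $\Lambda$ of weight $\le N^\ast(L-T,K\sim T)$ is correctly set up: one defines, for a random translation vector $y$ distributed according to an optimal fractional covering of $K\sim T$ by (translates of) $\ldots$, the weight of the translate $y+K$, and checks that the expected coverage of each $\lambda\in\Lambda$ is at least $1$. This is routine but notation-heavy; the probabilistic $k$-fold step, by contrast, is entirely handled by the Lemma and needs nothing new.
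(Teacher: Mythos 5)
Your proposal is correct and is exactly the route the paper intends (the paper itself only cites the proof of \cite{AS}*{Theorem~1.6}): reduce to the finite set $\Lambda$, fractionally cover it by translates of the erosion $K\thicksim T$ so that $\lambda\in x+(K\thicksim T)$ forces $\lambda+T\subseteq x+K$, bound that fractional cover by $N^{\ast}(L-T,K\thicksim T)$ via monotonicity (after discarding any $\lambda$ with $(\lambda+T)\cap L=\emptyset$), and feed it into the Lemma in place of the single-cover union bound. Two small corrections: $K\thicksim T$ is the erosion $\{x \st x+T\subseteq K\}$, not $\{x \st (x+T)\cap K\neq\emptyset\}$ (your step (1) uses the right notion anyway), and the second inequality follows simply from $N^{\ast}(\Lambda,\cdot)\le N^{\ast}(L,\cdot)$ when $\Lambda\subseteq L$ (the stated hypothesis ``$\Lambda\subset K$'' is evidently a typo for $\Lambda\subset L$), not from constraining where the translation vectors lie.
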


\begin{proof}[Proof of Theorem~\ref{thm:multiplecovspace}]
We may assume that 
\[
B(0,1)\subseteq K \subseteq \left [-d,d\right]^d.
\] 
Let $C=\left[-\frac{a}{2},\frac{a}{2}\right]^d$ be a cube of edge 
length $a$, where we will set $a$ later.

Let $\delta>0$ be fixed and let $\Lambda\subseteq \Red$ be a finite set 
such that $\lambda+\frac{\delta}{2}(K\cap(-K))$ is a saturated (ie. maximal) 
packing of 
$\frac{\delta}{2}(K\cap(-K))$ in $C-\frac{\delta}{2}(K\cap(-K))$. Thus 
$C\subseteq\Lambda+\delta K\subseteq \lambda+\delta(K\cap(-K))\subseteq 
\Lambda+\delta K$. 
By considering volume, we have that 
\[
|\Lambda|\le 
\frac{\vol{C-\frac{\delta}{2}(K\cap(-K))}}{\vol{\frac{\delta}{2}(K\cap(-K))}}
\le\frac{\left(a+\frac{\delta 
d}{2}\right)^d2^d}{\vol{B(0,1)}\left(\frac{\delta}{2}\right)^d}.\]

Equation \eqref{eq:easyfraccovering} yields that
\begin{equation}
\begin{multlined}
N^{\ast}(C-\delta(K\cap(-K)),K\thicksim \delta(K\cap(-K)))\le \\ 
N^{\ast}(C-\delta K, (1-\delta) K) \le 
\frac{\vol{C-K}}{\vol{(1-\delta) K}}\le 
\frac{\left(a+d\right)^d}{\left(1-\delta\right)^d\vol{K}}.
\end{multlined}
\end{equation}

From Theorem~\ref{thm:multiplecovgeneral} we have now 
\begin{equation}
\begin{multlined}
N_k(C,K)\le \left \lceil6 \frac{\left(a+d\right)^d}{(1-\delta)^d\vol{K}} \ln 
\left(\frac{\left(a+\frac{\delta d 
}{2}\right)^d2^d}{\vol{B(0,1)}\left(\frac{\delta}{2}\right)^d}
\right)\right\rceil \le \bigskip \\ 6 
\frac{\left(a+d\right)^d}{(1-\delta)^d\vol{K}} \ln 
\left(\frac{\left(a+\frac{\delta d 
}{2}\right)^d2^d}{\vol{B(0,1)}\left(\frac{\delta}{2}\right)^d}\right )+1.
\end{multlined}
\end{equation}
On the other hand 
\begin{equation}
\begin{multlined}
\vartheta^{(k)}(K)\le N_k(C,K)\frac{\vol{K}}{\vol{C}}\le 
6\frac{\left(a+d\right)^d}{\left(1-\delta\right)^d} 
\ln \left(\frac{\left(a+\frac{\delta d 
}{2}\right)^d2^d}{\vol{B(0,1)}\left(\frac{\delta}{2}\right)^d}\right)(\vol{C})^{
-1}+\frac{\vol{K}}{\vol{C}}.
\end{multlined}
\end{equation}

Choose now $\delta=\frac{1}{2d\ln d}$, $a=d^2$, and estimate $\vol{B(0,1)}$ by 
the volume of the cube of side length 
$\frac{1}{2\sqrt{d}}$, which is contained in $B(0,1)$.

\begin{equation}
\begin{multlined}
\vartheta^{(k)}\le 6 \left(\frac{d^2+d}{d^2}\right )^d \left( 1-\frac{1}{2d\ln 
d}\right)^{-1} \left(d \ln \left(4d^3\ln 
d+d+2+2d^{\frac{1}{2}}\right)\right)+1\le \medskip\\
6d \left( 1+\frac{1}{d}\right)^d\exp\left(\frac{1}{\ln d}\right)\ln \left(8d^3 
\ln 
d\right)\le \medskip\\ 
6d\left(1+\frac{2}{\ln d}\right)(3 \ln d+\ln \ln d+\ln 8) \le 
%\\ 
6ed(3\ln d+\ln \ln d+15)
\end{multlined}
\end{equation}
yields the desired bound.
\end{proof}

\section{Covering the sphere by strips -- A Direct Proof of 
Theorem~\ref{thm:coverbystrips}}\label{sec:strips}

In this section, we present a direct, probabilistic proof of 
Theorem~\ref{thm:coverbystrips}. We use the uniform probability measure on the 
sphere $\Sek$, and recall that the measure of any strip of Euclidean half-width 
$w$ is $w$.

Let $r=\frac{\ln{N}}{N}$. By a standard saturated packing argument, we may fix 
a set of points $v_1,\ldots,v_{N^2}$ on $\Sek$ such that the caps 
around $v_i$ of radius $r$ cover the sphere.

Let $X_i\;\; (i=1,\ldots,N)$ be independent random variables distributed 
uniformly on $\Sek$. We prove that with positive probability, the points $X_i$ 
will satisfy the conditions of the theorem.

First consider the following probability:\\
$Q_1 = \Pe(\exists v\in \Sek \st \abs{\langle v,X_j\rangle} \geq 
\frac{10\ln{N}}{N}, \mbox{ 
for all } j=1,\ldots, N)$.

Let\\ 
$P_i = \Pe(\exists v\in \Sek \st \abs{v-v_i} \leq r,   \abs{ \langle v, 
X_j\rangle } \geq 
\frac{10\ln{N}}{N},  \mbox{ for all } j=1,\ldots,N)$, where  $i=1,\ldots,N^2$.

The union of the events corresponding to $P_i$ covers the event corresponding 
to $Q_1$, 
because the caps around $v_i$ with radius $r$ cover the sphere. On the other 
hand, 
%since the variables $X_i$ are independent and uniformly distributed, 
%and the isometry group of $\Sek$ is transitive, 
clearly, $P_i$ does 
not depend on $i$.
We obtain that  $N^2 P_1 \geq Q_1 $.

Assume that $\abs {v-v_i} \leq r $.

 \begin{equation*}
 \abs {\langle v, X_j\rangle} - \abs {\langle v-v_i, X_j\rangle}  \leq  \abs 
{\langle v_i, X_j\rangle},
\end{equation*}
thus,
 \begin{equation*}
 \abs {\langle v, X_j\rangle} - r \leq  \abs {\langle v_i, X_j\rangle}.
\end{equation*}
 
Hence, we can estimate from above $P_1$ as

\begin{equation*}
 P_1 \leq \Pe(\abs {\langle v_1, X_j\rangle} \geq 9r, j=1,\ldots,N)  = \left(1- 
9r\right)^{N} = \left(1- 9 
\frac{\ln{N}}{N}\right)^{N},
\end{equation*}

thus,

\begin{equation*}
 P_1  \leq e^{-9 \ln{n}} = N^{-9},
\end{equation*}
which yields $Q_1 \leq  \frac{1}{N^7} < \frac{1}{2}$.

Next, for any unit vector $v$, we denote the number of points in the strip $ 
\abs{ \langle v, X_j\rangle} \leq 10r$ from the set $X_1,\ldots,X_N$  by $k_v$, 
and 
denote the number of points in the strip $ \abs{\langle v, X_j\rangle} \leq 11r 
$ from the 
set $X_1,\ldots,X_N$  by $h_v$.
We  will bound from above the probability
$Q_2 = \Pe(\exists v\in \Sek | k_v \geq c \ln{N})$, where we will fix $c$ later.

Let $R_i = \Pe(\exists v\in \Sek | \abs {v-v_i} \leq r ,  k_v \geq c \ln{N} )$  
where  $i=1,\ldots,N^2 $.

Clearly, $R_i$ does not depend on $i$, and $N^2 R_1 \geq Q_2$.
So, we will estimate $R_1$ from above.

Assume that  $\abs {v-v_1} \leq r$.

\begin{equation*}
 \abs {\langle v, X_j\rangle} + \abs {\langle v-v_1, X_j\rangle}  \geq  \abs 
{\langle v_1, X_j\rangle},
\end{equation*}
thus,
 \begin{equation*}
 \abs {\langle v, X_j\rangle} + r \geq  \abs {\langle v_1, X_j\rangle}. 
\end{equation*}

We denote the floor of $c \ln{N}$ by $t$, and let 
$z= 11\ln{N}$.

\begin{equation*}
 R_1 \leq P(h_{v_i}  \geq c\ln{N})  \leq  \left(\frac{z}{N}\right)^t 
\binom{N}{t}
\end{equation*}

By Stirling's formula we easily get that there exists a universal constant $D$ 
such that $ \binom{N}{t}  \leq \frac{DN^N}{t^t (N-t)^{(N-t)}} $.
Thus,

\begin{equation*}
 R_1  \leq  \frac{D z^t \cdot N^N}{N^t \cdot t^t  (N-t)^{(N-t)}}  \leq  
%\frac{D 
%z^t \cdot n^{(n-t)}}{ t^t  (n-t)^{(n-t)}} 
%\end{equation*}
%\begin{equation*}
% R_1  \leq  \frac{D z^t }{ t^t } \left(1+ \frac{t}{n-t}\right) ^{(n-t)} \leq 
%\frac{D 
%z^t }{ t^t } \left(1+ \frac{t}{n-t}\right) ^{\left(\frac{n-t}{t} t\right)}
%\end{equation*}
%\begin{equation}
% R_1  \leq   
\frac{D (e z)^t }{ t^t }.
\end{equation*}

If $ c \geq 100 $ then $ t \geq e^2 z $ , so we have $R_1  \leq   \frac{D 
}{ e^t }  \leq \frac{1}{N^3} $, if $N$ is large enough. It follows that

\begin{equation}
 Q_2 \leq N^2 R_1  \leq \frac{1}{2}.
\end{equation}

Overall, we obtained that $Q_1 +Q_2 <1 $, which means that if $N$ is 
large enough, then with positive probability the points $X_i$ will satisfy the 
the conditions of the theorem, with the constant $c=100$. This completes the 
proof of Theorem~\ref{thm:coverbystrips}.
%\end{proof}

Following the proof of  Theorem~\ref{thm:coverbystrips}, with a very little 
modification in the calculation, one can easily get the following theorem:

\begin{thm}\label{thm:football2}
There are $N$ points $x _i$  ($i=1,\ldots,N$)  on $\Sek$, such that for any 
unit 
vector $v$, there are at most $c\frac{\ln{N}}{\ln{\ln{N}}}$ chosen 
points in the strip $\abs{ \langle v, x\rangle } \leq \frac{1}{N}$, where $c$ 
is 
a 
universal constant.
\end{thm}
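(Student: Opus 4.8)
The plan is to re-run the probabilistic argument of Theorem~\ref{thm:coverbystrips} verbatim, only changing the width parameter from $\frac{10\ln N}{N}$ to $\frac 1N$ and the target multiplicity from $c\ln N$ to $c\frac{\ln N}{\ln\ln N}$. The covering-by-strips part is automatic: for any unit vector $v$, the strip $\{\abs{\langle v,x\rangle}\le \frac 1N\}$ has measure $\frac 1N>0$, so nothing needs to be done to ensure each strip catches a point --- only the upper bound on multiplicity matters, and we may even drop the "at least one" clause entirely. Hence the whole proof reduces to the bound for $Q_2$.

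First I would set up the same net: fix $r=\frac 1N$ (or a small multiple), and by a saturated-packing argument choose $v_1,\dots,v_{N^2}\in\Sek$ so that the caps of radius $r$ about the $v_i$ cover $\Sek$. Let $X_1,\dots,X_N$ be i.i.d. uniform on $\Sek$. For a unit vector $v$ let $k_v$ be the number of $X_j$ with $\abs{\langle v,X_j\rangle}\le \frac 1N$, and $h_v$ the number with $\abs{\langle v,X_j\rangle}\le \frac 2N$; the triangle-inequality step $\abs{\langle v,X_j\rangle}+r\ge\abs{\langle v_i,X_j\rangle}$ (when $\abs{v-v_i}\le r$) shows that on the event $\{\exists v:\abs{v-v_i}\le r,\ k_v\ge m\}$ we have $h_{v_i}\ge m$, so the event $Q_2=\Pe(\exists v: k_v\ge m)$ is covered by $N^2$ translates of $\Pe(h_{v_1}\ge m)$. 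Thus $Q_2\le N^2\Pe(h_{v_1}\ge m)$.

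Now I bound $\Pe(h_{v_1}\ge m)$ with $m=t=\lfloor c\frac{\ln N}{\ln\ln N}\rfloor$. Since each $X_j$ lands in the width-$\frac 2N$ strip about $v_1$ with probability $\frac 2N$, a union bound over $t$-subsets gives $\Pe(h_{v_1}\ge t)\le \binom Nt \left(\frac 2N\right)^t\le \left(\frac{2e}{t}\right)^t$, using $\binom Nt\le (eN/t)^t$. To make $N^2\left(\frac{2e}{t}\right)^t\to 0$ it suffices that $\left(\frac{2e}{t}\right)^t\le N^{-3}$ for large $N$, i.e. $t\ln\frac{t}{2e}\ge 3\ln N$. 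With $t\approx c\frac{\ln N}{\ln\ln N}$ we have $\ln t=\ln c+\ln\ln N-\ln\ln\ln N\sim \ln\ln N$, so $t\ln\frac t{2e}\sim c\frac{\ln N}{\ln\ln N}\cdot\ln\ln N=c\ln N$, which exceeds $3\ln N$ for $c\ge 4$ (say) and $N$ large. Hence $Q_2<\frac 12<1$, so with positive probability no strip $\abs{\langle v,x\rangle}\le\frac 1N$ contains more than $c\frac{\ln N}{\ln\ln N}$ of the chosen points, completing the proof.

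The only real point of care --- and the "main obstacle," such as it is --- is the asymptotic bookkeeping in the last display: one must check that $\ln t$ is genuinely of order $\ln\ln N$ (not smaller) so that the product $t\ln t$ recovers a constant multiple of $\ln N$ and beats the $N^2$ factor from the net; this is why the bound is $\frac{\ln N}{\ln\ln N}$ rather than $O(1)$. Everything else is identical to the proof of Theorem~\ref{thm:coverbystrips}, and indeed one could phrase Theorem~\ref{thm:football2} as the special case of Theorem~\ref{thm:vcfewinedge} applied to the bounded-VC-dimension hypergraph of spherical strips with the uniform measure.
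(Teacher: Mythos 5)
Your proposal is correct and is essentially the argument the paper intends: the paper gives no separate proof of Theorem~\ref{thm:football2}, stating only that it follows from the proof of Theorem~\ref{thm:coverbystrips} ``with a very little modification in the calculation,'' and your modification (net of $\sim N^2$ caps of radius $\approx 1/N$, union bound $\binom{N}{t}(2/N)^t\le(2e/t)^t$, and the check that $t\ln t\sim c\ln N$ for $t=c\ln N/\ln\ln N$) is exactly that modification, with the ``at least one point'' half of the argument correctly discarded since the theorem does not claim it. Your closing observation that the result is also the special case of Theorem~\ref{thm:vcfewinedge} for the bounded-VC family of strips matches the paper's own framing.
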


We pose the following open questions:

\begin{con}\label{con:alise}
There is a function $f$ on  the positive integers tending to infinity such 
that, for any $N$ points on $\Sek$, there is a 
unit vector $v$, for which the strip $\abs{\langle v,x\rangle} \leq 
\frac{1}{N}$ 
contains at 
least $f(N)$ of the given points.
\end{con}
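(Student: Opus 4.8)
The plan is to dualize the question, extract the easy lower bound $f(N)=2$ from a second-moment computation, and then try to combine averaging with the rigidity of the dual configuration to push $f$ to infinity; I should flag at the outset that this last step is exactly where the difficulty sits, and is the reason the statement is still only conjectured.

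\emph{Reformulation.} Fix $x_1,\dots,x_N\in\Sek$, let $\mu$ be the uniform probability measure, and put $C_i=\{v\in\Sek:\langle v,x_i\rangle=0\}$, the polar great circle of $x_i$; for $w=1/N$ let $C_i^w=\{v:\abs{\langle v,x_i\rangle}\le w\}$ be its $w$-thickening, which is itself a strip centered at $x_i$ of measure exactly $w$. The bilinear form is symmetric, so for every $v$ the number of points $x_i$ lying in the strip centered at $v$ of half-width $w$ equals $n(v):=\#\{i:v\in C_i^w\}$, the multiplicity of $v$ in the arrangement of the $N$ strips $C_1^w,\dots,C_N^w$, whose total measure is $Nw=1$. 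Thus Conjecture~\ref{con:alise} is equivalent to: \emph{any $N$ strips on $\Sek$ of total measure $1$ contain a point of multiplicity at least $f(N)$ for some function $f(N)\to\infty$}. Since strips are antipodally symmetric we may also symmetrize the point set and work in $\Re\Pe^2$, the $C_i^w$ becoming $w$-neighbourhoods of $N$ projective lines.

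\emph{A warm-up giving $f(N)=2$.} If some cap of radius $w$ contains $m$ of the $x_i$, then every $v$ on the polar circle of that cap's centre lies in $C_i^w$ for all those $m$ indices, so $\max_v n(v)\ge m$; hence we may assume every $w$-cap contains at most $f(N)$ of the points. In that regime any two of the circles $C_i$ cross transversally at scale $\gtrsim w$, so $\mu(C_i^w\cap C_j^w)\gtrsim w^2$ for every pair, and therefore
\[
\binom{\max_v n(v)}{2}\;\ge\;\int_{\Sek}\binom{n(v)}{2}\,\di\mu(v)\;=\;\sum_{i<j}\mu(C_i^w\cap C_j^w)\;\gtrsim\;\binom{N}{2}w^2\;\gtrsim\;1,
\]
giving $\max_v n(v)\ge2$. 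The obstacle is already visible here: for a well-distributed point set (an $N^{-1/2}$-net, or a random set) one has $\sum_{i<j}\mu(C_i^w\cap C_j^w)=\Theta(1)$, and in fact every higher moment $\int\binom{n(v)}{k}\,\di\mu$ is $O_k(1)$, so no linear averaging argument can detect the truth.

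\emph{Beyond the moments.} The extra structure I would try to exploit is that the dual configuration is not generic: the poles of the $\binom N2$ great circles through pairs $x_i,x_j$ are exactly the vertices $q_{ij}=C_i\cap C_j$ of the arrangement of the $C_i$, each automatically carrying $n(q_{ij})\ge2$, and all $\binom N2$ of them lie on the $1$-dimensional locus $\bigcup_i C_i$, with only $\approx N$ on each circle, i.e.\ at spacing only $\Theta(w)$ along the curves. I would set up a dichotomy: either the arrangement forces, in some $w$-ball, $\omega(1)$ of the vertices $q_{ij}$ --- equivalently $\omega(1)$ of the polar circles passing through that ball --- which yields the desired multiplicity directly; or the vertices are as evenly spread along $\bigcup_i C_i$ as possible, in which case I would hope to finish by a recursive greedy selection (repeatedly take a strip meeting the remaining points most often, delete those points, recurse) with the decay controlled by the constrained geometry, or by a pigeonhole over the compact space of great circles whose cells are adapted to $\bigcup_i C_i$ rather than to a uniform $w$-grid. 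The planar analogue --- any $N$ points in $[0,1]^2$ have a $\tfrac1N$-thick line through $\omega(1)$ of them, the $\binom N2$ connecting lines living on $N$ pencils in line space --- suggests the difficulty is intrinsic and is close to incidence estimates for thin tubes.

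\emph{Main obstacle.} The crux is precisely excluding a hypothetical ``super-uniform'' configuration: points so evenly spread that the arrangement of polar circles behaves like a random arrangement, for which all moments of $n(v)$ are $O(1)$ and the $\binom N2$ vertices really do sit almost uniformly, at spacing $\approx\pi w$, along $\bigcup_i C_i$. Such a configuration still achieves $\max_v n(v)=\Theta(\ln N/\ln\ln N)$, as in Theorem~\ref{thm:football2}, so the conjecture, if true, reflects a phenomenon lying above all the moments; a proof most likely has to come either from a topological obstruction (a Borsuk--Ulam or ham-sandwich argument forcing a bisecting great circle to absorb a growing amount of mass once multiplicities are globally bounded) or from a subadditivity estimate on the fattened arrangement --- and the lack of such an argument is exactly what leaves the conjecture open.
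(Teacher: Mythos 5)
The statement you were asked to prove is posed in the paper as Conjecture~\ref{con:alise}; the paper contains no proof of it, and your proposal, candidly, does not contain one either. So the honest verdict is that there is a gap --- the entire main step is missing --- but you have correctly diagnosed that this is because the problem is open, and everything you actually assert is sound. Your dualization is exactly the one the paper itself uses (in the dual form of Theorem~\ref{thm:coverbystrips} and in the remark following Conjecture~\ref{con: Julia}): by symmetry of the inner product, a point $v$ whose strip of half-width $w=1/N$ captures many $x_i$ is the same as a point of high multiplicity in the arrangement of the $N$ dual strips $C_i^w$, whose total measure is $Nw=1$. Your warm-up is also correct: if some $w$-cap holds $m$ of the points then any pole of that cap has multiplicity $\ge m$; otherwise the circles cross at angle $\gtrsim w$, so $\mu(C_i^w\cap C_j^w)\gtrsim w^2$ for every pair, and the second-moment identity $\int\binom{n(v)}{2}\,\di\mu=\sum_{i<j}\mu(C_i^w\cap C_j^w)\gtrsim N^2w^2=\Theta(1)$ forces $\max_v n(v)\ge2$. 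This gives the conjecture with the (bounded) function $f\equiv2$, which is more than the paper records.

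The concrete gap is the one you name yourself: to get $f(N)\to\infty$ you must rule out configurations for which $n(v)$ has all moments $O_k(1)$, and neither your dichotomy (``many vertices $q_{ij}$ in some $w$-ball, or else recurse/pigeonhole along $\bigcup_iC_i$'') nor the proposed topological route is carried out; as stated, the second branch of the dichotomy is a hope, not an argument, and the first branch only recovers multiplicities already counted by the moments. Your upper-bound remark is consistent with the paper: Theorem~\ref{thm:football2} shows one cannot hope for $f(N)$ larger than $O(\ln N/\ln\ln N)$. In short, the proposal is a correct and well-calibrated assessment of an open problem, with a small genuine partial result, but it is not a proof of the conjecture.
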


\begin{con}\label{con:rebbeca}
There is a function $g$ on  the positive integers tending to infinity such that 
for any $N$ points in the unit disk on the plane,
there is a strip of width $\frac{1}{N}$ containing at least $g(N)$ of the given 
points. 
\end{con}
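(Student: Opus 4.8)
Write $D\subseteq\Re^2$ for the unit disk. The conjecture is equivalent to a ``Heilbronn problem for strips'': there is $g(N)\to\infty$ such that any $N$ points in $D$ have a $g(N)$-element subset whose convex hull has minimal width at most $\tfrac1N$, i.e.\ which lies in a common strip of width $\tfrac1N$. (``Every strip of width $\tfrac1N$ contains at most $k$ of the points'' says precisely that every $(k+1)$-subset has hull width $>\tfrac1N$.) The basic difficulty is that a single direction is useless: $N$ equally spaced points on a diameter lie in no width-$\tfrac1N$ strip of size $>1$, so the direction of the good strip must depend on the whole configuration, and first-moment bounds saturate (see below).

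I would first peel off two easy regimes. A concentration dichotomy: if $(1-o(1))N$ of the points lie in some disk of radius $r(N)\to0$, then covering that small disk by $\lceil 2r(N)N\rceil$ strips of width $\tfrac1N$ and using the pigeonhole principle already yields a strip with at least $\tfrac{(1-o(1))N}{2r(N)N+1}\to\infty$ of the points; so we may assume the configuration does not concentrate at vanishing scales. A convex-layer dichotomy: if some convex layer of the point set has $m$ vertices, split its boundary into consecutive arcs; since the total turning of a convex polygon is $2\pi$ and its perimeter is at most $2\pi$, some block of $t+1$ consecutive vertices forms a chain of length $O(t/m)$ and total turning $O(t/m)$, hence deviates from its chord by $O(t^2/m^2)$, and taking $t\asymp m/\sqrt N$ places $\Omega(m/\sqrt N)$ of these vertices in a width-$\tfrac1N$ strip; so we may assume every convex layer has $O(\sqrt N)$ points, i.e.\ the onion depth is $\Omega(\sqrt N)$. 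For the single weak claim $g(N)\ge3$ one can instead quote a Heilbronn-type upper bound (Roth and its refinements): the points contain a triangle of area $o(N^{-1})$, whose minimal altitude drops below $\tfrac1N$ once a short dyadic argument (again via the concentration dichotomy) bounds its longest side from below.

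The heart of the matter is the remaining, ``spread out'' regime. Here the natural tool is an integral-geometric second-moment estimate: to each pair $\{p,q\}$ of points associate the set $\Theta_{pq}\subseteq[0,\pi)$ of directions $\theta$ with $|\langle p-q,u_\theta\rangle|\le\tfrac1N$ (where $u_\theta=(\cos\theta,\sin\theta)$), an arc of length $\Theta\!\left(\min\{1,\tfrac{1}{N|p-q|}\}\right)$; averaging over $\theta$ and using $|p-q|\le2$ produces a direction capturing $\Omega(N)$ pairs, which one would feed into a Szemer\'edi--Trotter-type incidence bound for ``$\tfrac1N$-fat lines''. Dually, this is the statement that in the parameter cylinder $[0,\pi)\times[-1,1]$ the $N$ sinusoids $t=|p_i|\cos(\theta-\alpha_i)$ (where $p_i=|p_i|(\cos\alpha_i,\sin\alpha_i)$), thickened by $\tfrac1N$ in the $t$-coordinate, must cover some point $\omega(1)$ times; note that the difference of two distinct such sinusoids is again a sinusoid with exactly one zero in $[0,\pi)$, so these curves form a $\tfrac1N$-thickened pseudoline arrangement with $\binom N2$ crossings.

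The main obstacle is that all of these first-order estimates fail to distinguish the hypothetical bad configuration from a random one. A uniformly random $N$-point set in $D$ has every strip of width $\tfrac1N$ of cardinality $O(\log N/\log\log N)$ and most strips of cardinality $O(1)$, yet it produces $\Theta(N)$ captured pairs in every direction and $\Theta(N^2)$ crossings spread essentially uniformly over the cylinder --- exactly the profile a configuration with all strips of bounded size would have. Thus any successful argument must be genuinely global, exploiting the rigidity of configurations that simultaneously avoid thin strips at all scales, in the spirit of the Koml\'os--Pintz--Szemer\'edi and Roth analyses of the classical Heilbronn problem but for nearly-collinear $k$-point flats with $k\to\infty$ --- and no such argument is known, which is why the conjecture is open. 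I expect that a resolution would also settle the closely related spherical Conjecture~\textup{\ref{con:alise}}, and that the truth is $g(N)=\Theta(\log N/\log\log N)$, matching the bound a random configuration achieves (cf.\ the spherical analogue, Theorem~\textup{\ref{thm:football2}}).
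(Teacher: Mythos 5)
The statement you were asked to prove is Conjecture~\ref{con:rebbeca}, which the paper poses as an \emph{open problem}: there is no proof of it in the paper, and your text --- to its credit --- does not actually claim to supply one. What you have written is a structured survey of partial reductions and of the obstacles facing the natural attacks, ending with the explicit admission that no complete argument is known. So the honest verdict is that the proposal has a gap, and the gap is the entire main case: after the two dichotomies, the ``spread out'' regime (no concentration at vanishing scales, all convex layers of size $O(\sqrt N)$) is left untouched, and your own diagnosis of why first-moment, second-moment and incidence-type estimates cannot distinguish a putative bad configuration from a random one is precisely the reason the problem is hard. A proof cannot end with ``no such argument is known.''

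On the content of the partial steps: the concentration dichotomy is correct (a disk of radius $r$ is covered by $\lceil 2rN\rceil$ strips of width $\tfrac1N$, so pigeonhole gives $\gtrsim \tfrac{1}{2r(N)}\to\infty$ points in one of them), and the convex-layer computation is plausible but, as you note, only yields $\Omega(m/\sqrt N)\to\infty$ when some layer has $m=\omega(\sqrt N)$ vertices, so it is genuinely only a reduction. The Heilbronn-type argument for the weak bound $g(N)\ge 3$ also needs the ``short dyadic argument'' spelled out, since a minimum-area triangle may have all sides short, in which case small area does not directly force small width; this is fixable but not done. None of this changes the bottom line: the conjecture remains open, your proposal correctly identifies it as such, and it should be presented as a discussion of approaches rather than as a proof. (Your closing remark is also slightly off in one direction: resolving the planar Conjecture~\ref{con:rebbeca} would not obviously settle the spherical Conjecture~\ref{con:alise} or vice versa without a transference argument, which you do not indicate.)
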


\begin{con}\label{con: Julia}
There is a function $h$ on  the positive integers tending to infinity such that 
for any $N$ points on $\Sek$ and any width $w>0$, there is a unit vector $v$, 
for which there are 
at least $h(N)$ given points in the strip $\abs{ \langle v, x\rangle} \leq w$,  
or there 
is no chosen point in the strip $\abs{\langle v, x> } \leq w$.
\end{con}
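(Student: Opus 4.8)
The plan is to reduce Conjecture~\ref{con: Julia} to a single geometric assertion about coverings of $\Sek$ by congruent strips, using the point--great-circle duality exploited throughout Section~\ref{sec:strips}. For a positive integer $N$, set
\[
M(N):=\min\Big\{\ \max_{v\in\Sek}\#\{i\st v\in S_i\}\ \st\ \Sek=\textstyle\bigcup_{i=1}^{N}S_i,\ \ S_i\text{ strips of a common Euclidean half-width}\ \Big\},
\]
the least possible maximal multiplicity of a covering of $\Sek$ by $N$ strips all of the same width. This minimum is attained and finite: three mutually orthogonal strips of Euclidean half-width $1/\sqrt3$ already cover $\Sek$, so $1\le M(N)\le N$, and in fact $M(N)\le c\ln N$ by Theorem~\ref{thm:coverbystrips}. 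I claim that if $M(N)\to\infty$, then Conjecture~\ref{con: Julia} holds with $h(N):=M(N)$. Indeed, fix $N$ points $x_1,\dots,x_N\in\Sek$ and a width $w>0$, and put $T_i:=\{v\in\Sek\st \abs{\langle v,x_i\rangle}\le w\}$, a strip of Euclidean half-width $w$. For a unit vector $v$, the number of the $x_i$ lying in the strip $\{x\in\Sek\st \abs{\langle v,x\rangle}\le w\}$ equals $\#\{i\st v\in T_i\}$, the covering multiplicity of $v$ with respect to $\{T_i\}_{i=1}^N$. If $\{T_i\}$ fails to cover $\Sek$, some $v$ has multiplicity $0$, i.e.\ the corresponding strip is empty; if $\{T_i\}$ covers $\Sek$, then it is a covering by $N$ strips of a common width, so by the definition of $M(N)$ some $v$ has multiplicity at least $M(N)=h(N)$, i.e.\ the corresponding strip contains at least $h(N)$ of the $x_i$. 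This is precisely the dichotomy in the conjecture. (The two ends of the range of $w$ are already contained in this argument: if $w<1/N$ then $\sum_i\vol{T_i}=Nw<1$, so $\{T_i\}$ cannot cover and an empty strip always exists; if $w\ge M(N)/N$ then $\int_{\Sek}\operatorname{mult}(v)\,\di v=Nw\ge M(N)$, so some strip is heavy.)

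Thus everything rests on the statement $M(N)\to\infty$: one cannot cover $\Sek$ by $N$ congruent strips with multiplicity bounded independently of $N$. This is the heart of the matter and the main obstacle. Volume considerations alone are useless for it. Writing $w$ for the common half-width, a covering with multiplicity $\le m$ forces $Nw=\int_{\Sek}\operatorname{mult}(v)\,\di v\le m$; the zone theorem of Fejes T\'oth, established by Jiang and Polyanskii, gives $N\arcsin w\ge\pi/2$, hence $Nw\ge\pi/2$; and estimating $\int_{\Sek}\binom{\operatorname{mult}(v)}{2}\,\di v=\sum_{i<j}\vol{S_i\cap S_j}\ge\binom N2\cdot cw^2\asymp(Nw)^2$ yields only $m=\Omega(1)$. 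All of these are consistent with the ``near-perfect'' regime $w\asymp 1/N$, $m=O(1)$, which must be excluded by genuinely geometric means.

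I would attack $M(N)\to\infty$ on two fronts. The first, and I think more promising, is to seek a multiplicity-weighted strengthening of the zone theorem: if $\Sek$ is covered by $N$ zones of angular half-width $\omega$ with every point lying in at most $m$ of them, then $N\omega$ should be bounded below by a function of $N/m$ that tends to infinity as $N/m\to\infty$, rather than merely by $\pi/2$. The Jiang--Polyanskii argument (like the older Bang-type plank arguments on the sphere) evaluates a well-chosen function at a random point and removes the zones one by one; one would try to build the multiplicity constraint into that bookkeeping, so that the accumulated deficit is amplified when no point is used too often. The second front is self-contained but combinatorially delicate: on $\Sek$ the $2N$ boundary circles $\{\abs{\langle v_i,x\rangle}=w\}$ cut out $\Theta(N^2)$ faces on which the multiplicity is constant, while their total length is only $\Theta(N)$ when $w\asymp 1/N$; one would try to show, by a discharging or Euler-characteristic argument or by an integral-geometric estimate over random great circles $C$ (for which $\int_C\operatorname{mult}=\sum_i\operatorname{length}(C\cap S_i)$ has a controllable law), that a covering is incompatible with all faces having multiplicity $O(1)$.

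Finally, for the conjecture itself any increasing lower bound on $M(N)$ is enough --- $M(N)\ge\Omega(\ln\ln N)$ or $M(N)\ge\Omega(\ln N/\ln\ln N)$ would already do --- so even a weak form of the strengthened zone theorem settles Conjecture~\ref{con: Julia}. Conversely, running the duality backwards (take $P$ to be the set of axes of an arbitrary covering $\Sek=\bigcup S_{v_i}^w$, so that the dual strips are exactly that covering) shows that if $\Sek$ could be covered by $N$ congruent strips with $O(1)$ multiplicity then the conjecture would be false. Hence $M(N)\to\infty$ is equivalent to the conjecture, and proving it is both the necessary and the sufficient task.
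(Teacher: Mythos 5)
The statement you are addressing is posed in the paper as an open conjecture: the paper gives no proof of Conjecture~\ref{con: Julia}, only the remark that Conjecture~\ref{con:alise} would imply it with $h=f$. Your proposal likewise does not prove it, and you say so yourself. What you do establish is correct and slightly sharper than the paper's remark: the point--strip duality argument shows that Conjecture~\ref{con: Julia} is \emph{equivalent} to the assertion $M(N)\to\infty$, i.e.\ that $\Sek$ cannot be covered by $N$ strips of a common width with covering multiplicity bounded uniformly in $N$ (the paper records only the one-way implication from Conjecture~\ref{con:alise}; your converse direction, taking the axes of a hypothetical low-multiplicity covering as the point set, is a worthwhile observation). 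The dichotomy ``fails to cover $\Rightarrow$ empty dual strip; covers $\Rightarrow$ some point of multiplicity $\geq M(N)$'' is sound, and the attainment of the minimum in the definition of $M(N)$ is unproblematic since multiplicities are positive integers.

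The genuine gap is the claim $M(N)\to\infty$ itself, which carries the entire content of the conjecture: the reduction is essentially a dual restatement rather than a simplification. Your own accounting makes the difficulty explicit --- the first-moment (total measure) bound, the second-moment bound on pairwise intersections, and the zone theorem are all consistent with a covering of half-width $w\asymp 1/N$ and multiplicity $O(1)$ --- and the two attack routes you sketch (a multiplicity-weighted strengthening of the zone theorem, and a discharging or integral-geometric argument on the arrangement of the $2N$ boundary circles) are research programmes, not arguments. So the proposal should be read as a correct equivalent reformulation of the conjecture together with an honest assessment of why the known tools fail, not as a proof; the missing step is exactly the open problem the paper poses.
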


Note that  Conjecture~\ref{con:alise} would imply Conjecture~\ref{con: Julia} 
with $h=f$. This is because if $w \geq  \frac{1}{N}$ then the definition of $f$ 
guarantees 
that, and if $ w \leq \frac{1}{N}$, then computing the sum of the areas of the 
dual strips associated to  the points $x _i$ , $i=1,\ldots,N$, they cannot 
cover 
the sphere, so in that case there exists a unit vector $ v $ such that there is 
no chosen point in the strip $ \abs{\langle v, x\rangle } \leq w $.

%JÁNOS, EZT MAJD PONTOSITSD!
%On the other hand Conjecture\ref{con: rebbeca} seems pretty hard, because it 
%easily implies a longstanding open problem in the Heilbronn topic (ezt itt 
%pontositani kell).

\section{
\texorpdfstring{Some analogues of the Epsilon-net Theorem}{Some 
analogues of the Epsilon-net Theorem}
}\label{sec:vc}

In this section, we prove Theorems \ref{thm:vcfewandmanyinedge} and 
\ref{thm:vcfewinedge}. 
Both proofs closely follow the double-sampling technique of Haussler and Welzl 
from \cite{HaWe87}.

We recall some basics notions from the theory of hypergraphs, for details, 
we refer to \cite{Ma02}.

\begin{defn}\label{defn:vcdim}
The \emph{shatter function} of a hypergraph $\Hi$ on the set $X$ 
is $\pi_\Hi(m)=\max\limits_{A\subset X, |A|=m}|\Hi|_A|$.
The \emph{Vapnik--Chervonenkis dimension} (\emph{VC-dimension}, in short) of 
$\Hi$ is the maximal $m$ for which $\pi(m)=2^m$ (if there is no maximum, th 
VC-dimension is infinite).
\end{defn}

We recall the Sauer--Shelah Lemma \cites{Sau72,She72}.

\begin{lem}\label{lemma:Lisa}
Let $\Hi$ be a hypergraph of VC-dimension $d$. Then for any non-negative 
integer $m$,
$\pi(m)\le \binom{m}{0}+\binom{m}{1}+\ldots +\binom{m}{d}
\leq 2m^d$.
\end{lem}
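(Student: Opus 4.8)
The plan is to establish the combinatorial bound $\pi_\Hi(m)\le\sum_{i=0}^d\binom{m}{i}$ by the classical double induction of Sauer and Shelah, and then to read off $\sum_{i=0}^d\binom{m}{i}\le 2m^d$ by an elementary estimate.

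First I would fix $A\subseteq X$ with $|A|=m$ and replace $\Hi$ by its trace $\Hi|_A$, so that we may assume $\Hi$ is a family of subsets of a finite set $A$ with $|A|=m$ and of VC-dimension at most $d$. Induct on $m$; the cases $m=0$, $d=0$, and $d\ge m$ are immediate, so assume $m\ge1$ and $1\le d\le m-1$. Choose $x\in A$ and split the count: let $\Hi'=\{H\setminus\{x\}\st H\in\Hi\}$ be the trace on $A\setminus\{x\}$, and let $\Hi^x=\{H\in\Hi\st x\notin H,\ H\cup\{x\}\in\Hi\}$, regarded as a family on $A\setminus\{x\}$. The fibres of the map $H\mapsto H\setminus\{x\}$ have size $1$ or $2$, and size $2$ precisely over the members of $\Hi^x$, giving $|\Hi|=|\Hi'|+|\Hi^x|$. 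Now $\Hi'$ has VC-dimension at most $d$, and the key point is that $\Hi^x$ has VC-dimension at most $d-1$: if $\Hi^x$ shattered $B\subseteq A\setminus\{x\}$, then, since each witnessing $H$ lies together with $H\cup\{x\}$ in $\Hi$, the family $\Hi$ would shatter $B\cup\{x\}$, forcing $|B\cup\{x\}|\le d$, i.e. $|B|\le d-1$. The inductive hypothesis applied on the $(m-1)$-element set $A\setminus\{x\}$ then yields
\[
|\Hi|\le\sum_{i=0}^{d}\binom{m-1}{i}+\sum_{i=0}^{d-1}\binom{m-1}{i}
=\sum_{i=0}^{d}\left(\binom{m-1}{i}+\binom{m-1}{i-1}\right)
=\sum_{i=0}^{d}\binom{m}{i}
\]
by Pascal's identity, with the convention $\binom{m-1}{-1}=0$; since $A$ was arbitrary, $\pi_\Hi(m)\le\sum_{i=0}^d\binom{m}{i}$.

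For the second inequality, I would just note that for $m\ge2$ one has $\binom{m}{i}\le m^i$, hence $\sum_{i=0}^d\binom{m}{i}\le\sum_{i=0}^d m^i\le m^d\cdot\frac{m}{m-1}\le2m^d$, while $m=1$ is the direct check $\binom10+\binom11=2=2\cdot1^d$. The only genuinely nontrivial step in the whole argument is the verification that collapsing at $x$ drops the VC-dimension by one; everything else is the Pascal recursion and a one-line geometric-series estimate. An alternative that avoids induction is the down-shifting argument: repeatedly replacing $H$ by $H\setminus\{x\}$ whenever the latter is not yet present yields a downward-closed family of the same cardinality and VC-dimension at most $d$, and such a family shatters each of its members, so consists only of sets of size at most $d$, whence the bound.
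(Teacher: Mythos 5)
The paper does not actually prove this lemma: it is recalled as the classical Sauer--Shelah Lemma, with references to Sauer and to Shelah, so there is no in-paper argument to compare against. Your proof is the standard one and is correct: the double induction on $m$ (and $d$), the fibre count $|\Hi|=|\Hi'|+|\Hi^x|$, and the key verification that the ``doubled'' family $\Hi^x$ has VC-dimension at most $d-1$ (because shattering $B$ by $\Hi^x$ forces $\Hi$ to shatter $B\cup\{x\}$) are all carried out properly, and Pascal's identity closes the induction; the alternative down-shifting argument you sketch is also a valid route. The elementary estimate $\sum_{i=0}^d\binom{m}{i}\le\sum_{i=0}^d m^i\le 2m^d$ is fine for $m\ge 1$; the only caveat is that for $m=0$ and $d\ge 1$ the inequality $\sum_{i=0}^d\binom{0}{i}=1\le 2\cdot 0^d=0$ fails, but this is a defect of the lemma as stated (``any non-negative integer $m$'') rather than of your argument, and is irrelevant to its use in the paper, where $m$ is always large.
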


%The Epsilon-net theorem of Hansler and Welzl states that
%\begin{thm}
%Let $X$ be a set and $\Hi\subset 2^X$ a hypergraph on $X$ of VC-dimension at 
%most $d\geq2$. Let $\mu$ be a measure on $X$. Then there is a 
%$\frac{1}{n}-$net of size at most $C\cdot d\cdot n\cdot \ln{n}$.
%\end{thm}

In the proof of Theorem~\ref{thm:vcfewandmanyinedge} and 
Theorem~\ref{thm:vcfewinedge} we assume that the measure of every singleton is 
$0$, but this is not a restriction, because we can replace every singleton with 
measure greater than $0$ with an interval having the same measure.

\begin{proof}[Proof of Theorem~\ref{thm:vcfewandmanyinedge}]
We will assume, that the measure of every singleton of $X$ is $0$, to 
ensure that in a random sample the probability of having some element more than 
once is zero.
The general case will follow in the following way. If $A:=\{p_1,p_2,\ldots\}$ 
is the set of elements of $X$ that, as singletons, are of positive measure, 
then we replace each element, say $p_i$, of $A$ by a 'labeled' interval 
$p_i\times [0,1]$. This way, we obtain the set $\hat X$ from $X$. We define the 
measure of $\hat\mu$ on $\hat X$ in such a way that the measure $\mu(p_i)$ is 
uniformly distributed on the labeled interval $p_i\times [0,1]$. The set family 
$\hat\FF$ on $\hat X$ is essentially $\FF$, where if $F\in\FF$ contains $p_i$, 
then the corresponding $\hat F\in\hat\FF$ contains the entire labeled interval 
$p_i\times [0,1]$.

Next, with this assumption of having no positive-measure singletons, let 
$X_1,\ldots, X_{2N}$ be independent random variables according to $\mu$ taking 
values in $X$, where $N:=\left\lfloor 
C\frac{d}{\varepsilon}\ln(1/\varepsilon)\right\rfloor$. Set $Q_0:=\{X_1,\ldots, 
X_{N}\}, Q_1:=\{X_{N+1},\ldots, X_{2N}\}$ and $Q:=Q_0\cup Q_1$.

The Epsilon-net theorem yields that the probability that $Q_0$ is a 
\emph{transversal} to $\Hi$ (that is, that each edge $H\in\Hi$ 
intersects $Q_0$) is greater than $\frac{1}{2}$, if $C$ is sufficiently large.

For a given $H\in\Hi$, let $E_0^H$ be the event that $\card{Q_0\cap H} > 
\maxmul$, where $C_1>0$ is to be chosen later.
Let $E_1^H$ be the event that $E_0^H$ holds and 
$\card{Q_1\cap H} \leq 2\varepsilon N$.

Let $E_0$ be the union of the events $E_0^H$ for all 
$H\in\Hi$, that is, $E_0$ is the event that, for some $H\in \Hi$, we have 
$\card{Q_0\cap H} > \maxmul$.
Let $E_1$ be the union of the events $E_1^H$ for all $H\in\Hi$.

We claim that $\Pe(E_0)\le 2 \Pe(E_1)$.

Indeed, 
\begin{equation*}
 \frac{\Pe(E_1)}{\Pe(E_0)}=\Pe(E_1| E_0)\geq\min_{H\in\Hi}\Pe(E_1^H|E_0^H) >1/2,
\end{equation*}
by Markov's inequality.

Thus, it is sufficient to show that $\Pe(E_1)<\frac{1}{4}$ to obtain the 
theorem.

Next, we sample in a different way. We permute the indices of 
the variables $X_1 ,\ldots, X_{2N}$ with a random permutation (taking each 
permutation with equal probability), and denote the resulting variables as 
$Y_1,\ldots,Y_{2N}$.
They are again independent. We estimate the probability of the 
event $E_1$ for these variables.

We fix a $2N$-element subset $R$ of $X$, and let $L:=H\cap R$. We 
estimate the probability of the event $E_1^H$ under the condition that $Q=R$.
By Lemma~\ref{lemma:Lisa}, there are at most 
$2(2N)^d$ possibilities for $L$, so we have 
\begin{equation}\label{eq:pe1bound}
\Pe(E_1 | Q=R)\leq 2(2N)^d\max_{H\in\Hi} \Pe(E_1^H| Q=R).
\end{equation}

We fix $H\in \Hi$. If $t:=\card{L}<\maxmul$, then $E_1^H$ does not 
hold.
We consider the case when $t\geq \maxmul$.

%Notice, that $\binom{2N}{N} > \binom{2N-t}{x} \cdot \frac{2^{t}}{D 
%\sqrt{2N}}$, %where $1 \leq x \leq 2N-t$ arbitrary and $D$ is a universal 
%constant.

In order to bound $\Pe(E_1^H| Q=R)$, we first note the following simple 
combinatorial fact. Let $V$ be a subset of $L$ with $0\leq m:=|V|\leq t$. 
Then,
\begin{equation*}
 \Pe(Q_1\cap H=V | Q=R)=\binom{2N-t}{N-m}/\binom{2N}{N}\leq
\binom{2N-t}{N-\lfloor t/2\rfloor}/\binom{2N}{N}
 \frac{D N^3}{2^{t}},
\end{equation*}
where $D$ is a universal constant. Since, for any $0\leq m\leq t$, we have 
$\binom{t}{m}$ ways to choose an $m$-element subset of $L$. Thus,

\begin{equation*}
 \Pe(E_1^H| Q=R)\le  
D \cdot \left(
\binom{t}{0}+\binom{t}{1}+\ldots+\binom{t}{\lfloor 2\varepsilon N\rfloor}
\right)2^{-t} N^3 \le 
3D\varepsilon N\binom{t}{\lfloor2\varepsilon N\rfloor}2^{-t} N^3,
\end{equation*}
using $t>4\varepsilon N$ if $C_1$ is large enough compared to $C$.
By Stirling's approximation, with the notations $l:=DN^3$, 
$k:=\lfloor2\varepsilon N\rfloor$ and $r:=t/k$, if $C_1$ is large enough 
compared to $C$, then $r > 10$, and the right hand side is less than

\begin{equation*}
\frac{3l \varepsilon N}{2^t} \cdot \frac{t^t}{k^{k} (t-k)^{t-k}} 
\leq
\frac{2lkr^k}{2^{rk}}\cdot\left(\frac{r}{r-1}\right)^{(r-1)k}
<
\left(\frac{3}{4}\right)^{(r-1)k} \cdot l<
\left(\frac{3}{4}\right)^{\frac{\maxmul}{2}} \cdot l,
\end{equation*}
if $C_1$ (and hence, $r$) is sufficiently large. 
So, by \eqref{eq:pe1bound}, it is sufficient to show, that if $C_1$ is large 
enough, then 
\begin{equation*}
2(2N)^d \left(\frac{3}{4}\right)^{\frac{\maxmul}{2}} \cdot DN^3< 1/4.
\end{equation*}
Clearly, it follows from
\begin{equation*}
D(4 C\frac{d}{\varepsilon}\ln(1/\varepsilon))^{d+3} 
\left(\frac{3}{4}\right)^{\frac{\maxmul}{2}} < 1/4.
\end{equation*}

The latter holds by the restriction $\varepsilon \leq 1/d$, if 
$C_1$ is large enough, because:
\begin{equation*}
\sqrt{D}(4 C d)^{d+3} \left(\frac{3}{4}\right)^{\frac{\maxmul}{4}} <
\frac{1}{4},
\end{equation*}
and
\begin{equation*}
\sqrt{D}\left(\frac{\ln(1/\varepsilon)}{\varepsilon}\right)^{d+3} 
\left(\frac{3}{4}\right)^{\frac{\maxmul}{4}} < 1.
\end{equation*}

Thus, by \eqref{eq:pe1bound}, we have that $\Pe(E_1 | Q=R)<1/4$. Since $R$ 
was arbitrary, we obtain $\Pe(E_1)<1/4$ completing the proof of the first part 
of Theorem~\ref{thm:vcfewandmanyinedge}.
The second part follows by the same calculation and the inequality
\begin{equation*}
D\left (4 
C\frac{d}{\varepsilon}\ln\left(\frac{1}{\varepsilon}+1\right)\right)^{d+3} 
\left(\frac{3}{4}\right)^{\frac{d\ln d\ln\left(\frac{1}{\varepsilon}+1\right) 
}{2}} < 1/4.
\end{equation*}
\end{proof}

\begin{proof}[Proof of Theorem~\ref{thm:vcfewinedge}]
Similarly to the proof of Theorem~\ref{thm:vcfewandmanyinedge}, we will 
assume, that the measure of every singleton of $X$ is $0$.

Let $X_1 ,\ldots, X_{N(\ln{N}+1)}$ be independent 
random variables according to $\mu$ taking values in $X$.  Set 
$Q_0:=\{X_1,\ldots, X_{N}\}, Q_1:=\{X_{N+1},\ldots, X_{N(\ln{N}+1)}\}$
and $Q:=Q_0\cup Q_1$. Denote by $r=Cd\frac{\ln{N}}{\ln{\ln{N}}}$, where $C$ is 
to be chosen later.

%Let $E_0$ be the event that for some $H\in \Hi$, the number of indices in 
%$j=1,\ldots,n$ with $X_j \in H$ is greater than $r$.

%Let $E_1$ be the event that for some $H\in \Hi$, the number of indices in 
%$j=1,\ldots, n$ with $X_j \in H $ is greater than $r$, and $X_k \notin H$ 
%for all $k>n$.

For a given $H\in\Hi$, let $E_0^H$ be the event that $\card{Q_0\cap H} > r$.
Let $E_1^H$ be the event that $E_0^H$ holds and 
$\card{Q_1\cap H}=\emptyset$.

Let $E_0$ be the union of the events $E_0^H$ for all 
$H\in\Hi$, that is, $E_0$ is the event that, for some $H\in \Hi$, we have 
$\card{Q_0\cap H} > r$.
Let $E_1$ be the union of the events $E_1^H$ for all $H\in\Hi$.

We claim that $\Pe(E_0)\le N^2 \Pe(E_1)$.

Indeed, 
\begin{equation*}
 \frac{\Pe(E_1)}{\Pe(E_0)}=\Pe(E_1| E_0)\geq\min_{H\in\Hi}\Pe(E_1^H|E_0^H)
>\left(\frac{N-1}{N}\right)^{N\ln{N}}\ge \frac{1}{N^2},
\end{equation*}
where, in the last inequality we used the fact that $(1-x/2)\ge e^{-x}$ for 
$0<x<1.59$.

Thus, it is sufficient to show that $\Pe(E_1)<\frac{1}{N^2}$ to obtain the 
theorem.

Next, we sample in a different way. We permute the indices of the variables 
$X_1 
,\ldots, X_{N(\ln{N}+1)}$ with a random permutation (taking each permutation 
with equal probability), and let the resulting variables be $Y_1,\ldots, 
Y_{N(\ln{N}+1)}$. They are again independent random variables. We will estimate 
the probability of the event $E_1$ for these variables.

We fix a subset $R$ of $X$ of $N(\ln{N}+1)$ elements, and let $L:=H\cap 
R$. We estimate the probability of the event $E_1^H$ under the condition that 
$Q=R$. By Lemma~\ref{lemma:Lisa}, there are at most $2(N\ln N)^d$ possibilities 
for $L=H\cap R$, so we have 
\begin{equation}\label{eq:pe1bound2}
\Pe(E_1 | Q=R)\leq 2(N\ln N)^d\max_{H\in\Hi} \Pe(E_1^H| Q=R).
\end{equation}

We fix $H\in \Hi$. If $t:=\card{L}\leq r$, then $E_1^H$ does not 
hold.

If  $t> r$, then 

\begin{equation*}
\Pe(E_1^H| Q=R)\leq
\frac{\binom{N}{t}}{\binom{N(\ln{N}+1)}{t}} \le 
\left(\frac{N}{N\ln{N}}\right)^{t} \le 
\left(\frac{1}{\ln{N}}\right)^{r} =
%e^{-\ln{\ln{n}} \cdot C \cdot d\cdot \frac{\ln{n}}{\ln{\ln{n}}}}=
N^{-Cd}.
\end{equation*}

If $C$ is large enough, then by \eqref{eq:pe1bound2}, $\Pe(E_1 | Q=R)<1/N^2$. 
Since this bound does not depend on the choice of $R$, we have $\Pe(E_1)<1/N^2$ 
finishing the proof of Theorem~\ref{thm:vcfewinedge}.
\end{proof}

\section*{Acknowledgement}

M. Naszódi thanks the support of the Swiss National Science Foundation grants 
no. 200020-162884 and 200021-175977; the J\'anos Bolyai Research Scholarship of 
the Hungarian Academy of Sciences, and the National Research, Development, and 
Innovation Office, NKFIH Grant PD-104744. The three authors were partially 
supported by the National Research, Development, and Innovation Office, NKFIH 
Grant K119670. Part of this work is part of the MSc. thesis of N. Frankl.

We are grateful for the illuminating conversations that we had with Nabil 
Mustafa on epsilon nets, and with G\'abor Fejes T\'oth on geometric covering 
problems.

\bibliographystyle{amsalpha}
\bibliography{biblio}
\end{document}